\newtheorem{thm}{Theorem}[section]
\newtheorem{corollary}[thm]{Corollary}
\newtheorem{proposition}[thm]{Proposition}
\newtheorem{lemma}[thm]{Lemma}
\newtheorem{step}[thm]{Step}
\theoremstyle{definition}
\newtheorem{remark}[thm]{Remark}
\newtheorem{example}[thm]{Example}
\numberwithin{equation}{section}
\newcommand{\NE}{\operatorname{NE}}
\newcommand{\Pic}{\operatorname{Pic}}
\newcommand{\Exc}{\operatorname{Exc}}
\newcommand{\Hom}{\operatorname{Hom}}
\newcommand{\ph}{\varphi}
\newcommand{\w}{\widetilde}
\newcommand{\ma}{\mathcal}
\newcommand{\la}{\longrightarrow}
\newcommand{\ol}{\mathcal{O}}
\newcommand{\pr}{\mathbb{P}}
\newcommand{\R}{\mathbb{R}}
\newcommand{\Z}{\mathbb{Z}}
\newcommand{\N}{\mathcal{N}_1}
\newcommand{\im}{\operatorname{Im}}
\title[Classification of Fano 4-folds with $\delta=3$ and $\rho=5$]{Classification of Fano 4-folds with Lefschetz defect 3 and Picard number 5}
\subjclass[2010]{14J45, 14J35, 14E30}
\author{Cinzia Casagrande}
\address{Cinzia Casagrande:
 Universit\`a di Torino,
 Dipartimento di Matematica,
via Carlo Alberto 10,
 10123 Torino - Italy}
\email{cinzia.casagrande@unito.it}
\author{Eleonora A. Romano}
\address{Eleonora A. Romano: Universytet Warszawski, Instytut Matematyki, Banacha 2, 02-097 Warszawa - Poland}
\email{elrom@mimuw.edu.pl}
\begin{document}

\maketitle

\section{Introduction}
\noindent 
 The classification of (smooth, complex) Fano manifolds has been achieved up to dimension $3$ and attracts a lot of attention also in higher dimensions, especially due to the Minimal Model Program. Indeed we recall that Fano manifolds appear in the birational classification of varieties of negative Kodaira dimension: in this case the MMP is expected to end up with a fiber type morphism whose fibers are (mildly singular) Fano varieties. 

In the early 80's the classification of Fano 3-folds in \cite{morimukai} due to Mori and Mukai was the starting point to study Fano manifolds via their contractions. In fact, the Fano condition makes the situation special, because the Cone and the Contraction Theorems hold for the whole cone of effective curves.
Nevertheless, there is still no complete classification of Fano varieties in dimension $4$ and higher.

In this paper we focus on classification results of some Fano 4-folds.
Let us fix some notation. Given a smooth complex projective variety $X$, we denote by $\mathcal{N}_{1}(X)$ the $\mathbb{R}$-vector space of one-cycles with real coefficients, modulo numerical equivalence, whose dimension is the \textit{Picard number} $\rho_{X}$.

Let $D$ be a prime divisor in $X$. The inclusion $i\colon D\hookrightarrow X$ induces a pushforward of one-cycles $i_{*}\colon \mathcal{N}_{1}(D)\to \mathcal{N}_{1}(X)$. We set $\mathcal{N}_{1}(D,X):=i_{*}(\mathcal{N}_{1}(D))\subseteq \mathcal{N}_{1}(X)$, which is the linear subspace of $\mathcal{N}_{1}(X)$ spanned by numerical classes of curves contained in $D$. 
In \cite{codim} the following invariant, called \textit{Lefschetz defect}, was introduced: 
\begin{center}
$\delta_{X}:=\max\{\operatorname{codim} \mathcal{N}_{1}(D,X)| D\subset X \text{ prime divisor} \}$. 
\end{center}
By [loc.\ cit., Th.~1.1], if $X$ is a 
Fano manifold of arbitrary dimension with $\delta_X\geq 4$, then $X\cong S\times T$, with $S$ a del Pezzo surface. As a consequence, all Fano $4$-folds with $\delta_X\geq 4$ are well known, being product of two del Pezzo surfaces. 

In this paper we deal with the case in which $X$ is a Fano $4$-fold with $\delta_{X}=3$. 
Under this assumption, by \cite[Th.~1.1]{cdue} we know that if $X$ is not a product of two del Pezzo surfaces, then $\rho_{X}\in \{5,6\}$. Therefore in order to complete the classification of Fano $4$-folds with $\delta_X=3$ we are left to study the cases in which $\rho_{X}=5$ and $\rho_{X}=6$. This setting has already been studied in \cite{M_R}, where several properties of these $4$-folds are shown (see Th.~\ref{factorization}).

Our main result is the complete classification of Fano $4$-folds with $\rho_{X}=5$ and $\delta_X=3$. To give the statement, we first need to introduce two examples; the former is due to Tsukioka, while the latter is new.
\begin{example}[\cite{toru5}]\label{e1}
Let $p,q\in\pr^4$ be distinct points, and $Q\subset \pr^4$ a smooth quadric surface disjoint from the line $\overline{pq}$. 
Let $Z$ be the blow-up of $\pr^4$ along $\overline{pq}$,  $F_p,F_q\subset Z$  the fibers over $p$ and $q$ respectively, and $S\subset Z$ the transform of $Q$. 

The surfaces $S,F_p,F_q$ are pairwise disjoint in $Z$; let
 $X$ be the blow-up of $Z$ along $S$, $F_p$, and $F_q$. Then $X$ is a Fano $4$-fold with $\rho_X=5$ and $\delta_X=3$.

We can also describe $X$ as follows:
let $X'$ be the blow-up of $\pr^4$ along $p$, $q$, and $Q$. Then $X$ is the blow-up of $X'$ along the transform of the line $\overline{pq}$. 
\end{example}
\begin{example}\label{e2}
 The $4$-fold $Z:=\pr_{\pr^2}(\ol^{\oplus 2}\oplus \ol(2))$ has 
a divisorial contraction $Z\to W:=\pr(1,1,1,2,2)$ which sends 
 the exceptional divisor $D$ to a curve (the singular locus of $W$); let  $F_1,F_2\subset D$ be two distinct fibers.

Let moreover $\ol_W(1)$ be the ample generator of $\Pic(W)$, and
  ${H}\in\Pic(Z)$ the pullback of $\ol_W(1)$. Consider $S\subset Z$ a general complete intersection of elements in the linear systems $|{H}|$ and $|2{H}|$;  $S$ is a del Pezzo surface of degree $2$ (see \S\ref{details}).

The surfaces $S,F_1,F_2$ are pairwise disjoint in $Z$; let
 $X$ be the blow-up of $Z$ along $S$, $F_1$, and $F_2$.
Then $X$ is a  Fano $4$-fold with $\rho_X=5$ and $\delta_X=3$ (see Lemma \ref{e2fano}).
\end{example}
We recall that toric Fano $4$-folds have been classified by Batyrev \cite{bat2}
and Sato \cite{sato}; we refer to \cite{bat2} for the 
 terminology concerning toric varieties and their combinatorial type.
\begin{thm} \label{main} Let $X$ be a Fano 4-fold with $\rho_X=5$ and $\delta_X=3$. Then $X$ is one of the following varieties: the toric Fano $4$-folds $K_1$, $K_2$, $K_3$, $K_4\cong\pr^2\times S_4$ ($S_4$  the blow-up of $\pr^2$ in three non collinear points), or one of the $4$-folds of Examples \ref{e1} and \ref{e2}.
\end{thm}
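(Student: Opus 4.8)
The strategy is to analyze $X$ through its elementary contractions, using the structural results available from \cite{M_R} and \cite{cdue} (Theorem~\ref{factorization}), which for $\delta_X=3$, $\rho_X=5$ guarantee the existence of enough divisorial contractions sending a divisor to a point or to a curve. The first step is to fix a prime divisor $D\subset X$ with $\codim\mathcal{N}_1(D,X)=3$, so that $\mathcal{N}_1(D,X)$ is a $2$-plane in $\mathcal{N}_1(X)$; one then studies the face of $\NE(X)$ contained in this plane and the associated contraction $\varphi\colon X\to Y$. From the general theory of Lefschetz defect one expects $\varphi$ to be birational with exceptional locus meeting $D$, and the combinatorics of the remaining three-dimensional quotient $\mathcal{N}_1(X)/\mathcal{N}_1(D,X)$ should be very constrained. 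The aim is to show that, after contracting the three extremal rays transverse to $D$, one lands on a Fano $4$-fold $Z$ of Picard number $2$ whose geometry is essentially forced, and that $X\to Z$ is the blow-up of three pairwise disjoint smooth centers (two of which are fibers of a $\pr^1$- or $\pr^2$-bundle structure on a divisor of $Z$, the third a surface).

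Concretely, I would first enumerate the possible types of the relevant extremal contractions: each must be either a smooth blow-up along a surface, or a fiber-type contraction; using adjunction, the Fano condition, and the bound $\rho_X=5$, rule out small contractions and contractions to points of high Picard number. Next, I would run the classification of $\rho=2$ Fano $4$-folds that can occur as the base $Z$ — here the candidates are $\pr^4$ blown up along a line (as in Example~\ref{e1}), the projective bundle $\pr_{\pr^2}(\ol^{\oplus2}\oplus\ol(2))$ (as in Example~\ref{e2}), and a short list of toric ones — invoking known classification of Fano $4$-folds of Picard number $\le 2$ with the required divisorial contraction structure. For each such $Z$, I would identify the admissible triples of disjoint smooth centers whose blow-up keeps the anticanonical class ample: the disjointness forces the two "fiber" centers $F_1,F_2$ to lie in the exceptional or bundle divisor, and a Fano-cone computation on the blow-up pins down which surface $S$ is allowed (degree, embedding), matching Examples~\ref{e1}, \ref{e2} or producing the toric $K_1,\dots,K_4$. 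Finally, for the toric subcase I would cross-check against Batyrev's and Sato's lists to confirm that exactly $K_1,K_2,K_3,K_4\cong\pr^2\times S_4$ arise.

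The main obstacle I anticipate is the case analysis for the base $Z$ and, more delicate still, verifying ampleness of $-K_X$ after blowing up three centers: one must control all the curve classes — the fibers of the blow-ups, lines in the base, and curves in $S$, $F_1$, $F_2$ — and show their intersection with $-K_X$ stays positive precisely in the listed configurations, while excluding all near-misses (e.g.\ wrong degree of $S$, centers not disjoint, or a fourth contraction that would force $X$ to be a product and hence $\delta_X\ge4$, a contradiction). Handling the non-general position of $S$ in Example~\ref{e2}, and proving that a \emph{general} complete intersection of the stated type is a del Pezzo surface of degree $2$ disjoint from $F_1,F_2$, is the technical heart; I would isolate this in a separate lemma (the one cited as Lemma~\ref{e2fano}) and treat the ampleness verification there by an explicit computation on the Cox ring / toric model of $Z$.
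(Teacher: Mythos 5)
Your plan has the right rough shape (realize $X$ as a blow-up of a Picard-number-$2$ base along three disjoint surfaces, split into a toric case checked against Batyrev and two non-toric cases), but the step on which everything rests is missing. You propose to obtain $Z$ by ``contracting the three extremal rays transverse to $D$'' and assert that this lands on a Fano $4$-fold of $\rho=2$ with $X\to Z$ a blow-up of three pairwise disjoint smooth centers. That assertion is precisely the hard content of the theorem and does not follow from the general theory of the Lefschetz defect: for a divisor $D$ with $\codim\N(D,X)=3$ the associated structure is of \emph{fiber type} (a flat fibration $X\to\pr^2$ factoring through a conic bundle and $\pr^1$-bundles, Th.~\ref{factorization}), not a birational contraction with exceptional locus meeting $D$; and composing elementary birational contractions of $X$ gives no a priori control that each step is a smooth blow-up along a surface, that the intermediate varieties are smooth, or that the three centers are disjoint. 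The paper gets this by a different mechanism (Step \ref{Z}): it shows the intermediate blow-up $X_1\to\pr^2$ is a smooth morphism with del Pezzo fibers of $\rho=3$ and $-K_{X_1}$ relatively ample, and then uses Wi\'sniewski's result on relative cones to extend the fiberwise contractions $S\to\mathbb{F}_1\to\pr^2$ to a global factorization $X_1\to X_2'\to Z\to\pr^2$ with $\ph\colon Z\to\pr^2$ a $\pr^2$-bundle and the three centers disjoint. Your proposal contains no substitute for this construction.

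Even granting $X=\Bl_{S_1\cup S_2\cup S_3}Z$ with $Z$ a $\pr^2$-bundle over $\pr^2$, the classification direction still requires showing that two of the surfaces are sections and the third has degree $d\le 2$ over $\pr^2$, that the relevant bundles split ($Z\cong\pr_{\pr^2}(\ol^{\oplus2}\oplus\ol(a))$, $X_2\cong\pr_Y(\ol\oplus L)$), and that $S_1$ is exactly a quadric surface (resp.\ a complete intersection of type $(H,2H)$). In the paper this occupies Steps \ref{points}--\ref{Ex2}: the fiberwise general-position argument, the relative secant divisor $H$, the identification $D\cong\pr^1\times\pr^2$, normal-bundle computations and Kodaira/Kawamata--Viehweg vanishing to extend a section $K\supset B_1$, forcing $d=2$. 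Your proposal replaces all of this with ``a Fano-cone computation pins down which surface $S$ is allowed,'' which only addresses the \emph{existence} direction (checking that the listed blow-ups are Fano, done separately in Lemmas \ref{nef} and \ref{e2fano}) rather than excluding all other centers on all other bases; indeed what you single out as the technical heart (ampleness of $-K_X$ for the general complete intersection in Example \ref{e2}) is the easier half of the theorem. A minor but real slip: a ``fourth contraction forcing $X$ to be a product, hence $\delta_X\ge4$'' is not a contradiction, since products can have $\delta=3$ (e.g.\ $K_4\cong\pr^2\times S_4$).
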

We describe these varieties and their invariants in \S \ref{explicit}, see Table \ref{table}. They are all rational, as already shown in  \cite[Cor.~1.3]{M_R}. The first five are rigid, while Example \ref{e2} yields a positive dimensional family (see Lemma \ref{e2fano}).

\medskip

We note that the assumptions $\rho_X=5$ and $\delta_X=3$ imply that $X$ contains a prime divisor $D$ with $\dim\N(D,X)=2$; in fact it is easy to see that all the varieties listed in Theorem \ref{main} also contain a prime divisor $D'$ with $\rho_{D'}=2$. We obtain the following application to Fano $4$-folds containing a prime divisor with $\rho=2$; an analogous result for the case of a prime divisor with $\rho=1$ is given in \cite[Th.~3.8]{minimal}.
\begin{corollary}\label{rho2}
Let $X$ be a Fano $4$-fold containing a prime divisor $D$ with $\rho_D=2$, or more generally with $\dim\N(D,X)=2$. Then either $X\cong\pr^2\times S$, or $\rho_X\leq 5$. Moreover, $\rho_X=5$ if and only if $X$ is one of the $4$-folds listed in Theorem \ref{main}. 
\end{corollary}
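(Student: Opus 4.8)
The plan is to deduce Corollary~\ref{rho2} from Theorem~\ref{main} together with the structural results on Fano $4$-folds with $\delta_X=3$ recalled earlier. First I would observe that the hypothesis directly bounds the Lefschetz defect: if $D\subset X$ is a prime divisor with $\dim\N(D,X)=2$, then $\codim\N(D,X)=\rho_X-2$, so $\delta_X\geq\rho_X-2$. Now apply the known classification of Fano $4$-folds according to $\delta_X$. If $\delta_X\geq 4$, then by [loc.\ cit., Th.~1.1] quoted in the introduction $X\cong S\times T$ is a product of two del Pezzo surfaces; combined with $\rho_X-2\leq\delta_X$ this forces $\rho_X\leq\rho_S+\rho_T$ and a short case analysis on which products actually admit a prime divisor $D$ with $\dim\N(D,X)=2$ singles out $X\cong\pr^2\times S$ (here one uses that for a product $S\times T$ a prime divisor $D$ with $\dim\N(D,X)=2$ and $\delta_X\geq\rho_X-2$ essentially must be a fiber of one projection, forcing one factor to be $\pr^2$). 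If $\delta_X\leq 2$, then $\rho_X-2\leq 2$, i.e. $\rho_X\leq 4$. The remaining case is $\delta_X=3$, where $\rho_X-2\leq 3$ gives $\rho_X\leq 5$; and if in addition $X$ is not a product of two del Pezzo surfaces, then by \cite[Th.~1.1]{cdue} $\rho_X\in\{5,6\}$, so that $\rho_X\leq 5$ as well.

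Next I would address the ``moreover'' part. By the above, $\rho_X=5$ together with the existence of $D$ forces $\delta_X\geq 3$; but $\delta_X\geq 4$ would again give $X\cong S\times T$ with $\rho_X=5$, and one checks directly (as in the product analysis above) that such a product either is of the form $\pr^2\times S$ with $\rho_S=4$ — which I would either include among the listed varieties ($K_4\cong\pr^2\times S_4$ is the case $S=S_4$, and $\pr^2\times(\pr^1\times\pr^1)$-type products have $\delta\ge 4$ but $\rho=4$, etc.) or rule out by a Picard-number count — or has $\delta_X=4$ and no prime divisor with $\dim\N(D,X)=2$, a contradiction. Hence $\delta_X=3$ exactly, and then Theorem~\ref{main} lists all possibilities for $X$. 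Conversely, as already noted in the paragraph preceding the corollary, each of the $4$-folds in Theorem~\ref{main} does contain a prime divisor $D'$ with $\rho_{D'}=2$ (hence with $\dim\N(D',X)=2$), which gives the reverse implication.

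The one genuinely delicate point is the product case $\delta_X\geq 4$: one must verify carefully that among products $S\times T$ of del Pezzo surfaces, the existence of a prime divisor $D$ with $\dim\N(D,X)=2$ really does force $X\cong\pr^2\times S$, and that this is consistent with the statement (so in particular that no product with $\rho_X=5$ and $\delta_X\geq 4$ slips through as a missed item). Concretely, for $X=S\times T$ a prime divisor $D$ surjecting onto both factors has $\N(D,X)=\N(X)$, so $\dim\N(D,X)=2$ forces $\rho_X=2$; otherwise $D$ is (a divisor in) a fiber $\{s\}\times T$ or $S\times\{t\}$, say $D=S\times C$ with $C\subset T$ a curve — but then $\N(D,X)\supseteq\N(S\times\{pt\})$ has dimension $\rho_S+1$ unless... — the bookkeeping here, i.e. correctly analyzing which fibers/sub-fibers of a product give a $2$-dimensional $\N(D,X)$, is where I expect to spend the most care, and it is essentially the only step not immediately handed to us by the quoted theorems. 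Everything else is a direct application of Theorem~\ref{main}, \cite[Th.~1.1]{cdue}, and the $\delta_X\ge 4$ structure theorem.
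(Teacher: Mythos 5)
Your overall route is the same as the paper's: from $\dim\N(D,X)=2$ you get $\delta_X\geq\codim\N(D,X)=\rho_X-2$, and you split into $\delta_X\leq 3$ (where $\rho_X\leq\delta_X+2\leq 5$, and $\rho_X=5$ forces $\delta_X=3$ so Theorem \ref{main} applies, the converse being the divisor $D'$ with $\rho_{D'}=2$ noted before the corollary) versus $\delta_X\geq 4$ (where $X\cong S\times T$ is a product of del Pezzo surfaces). The genuine gap is exactly the step you flag and leave open: the product case. As sketched it would not go through. A prime divisor of $S\times T$ can never be ``(a divisor in) a fiber'': $D$ has dimension $3$ and a fiber has dimension $2$; if $D$ fails to dominate one factor it is of the form $C\times T$ or $S\times C$ with $C$ a curve. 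Your claim that a divisor dominating both factors satisfies $\N(D,X)=\N(X)$ is unjustified (and not needed). Also the ``moreover'' discussion is off: $\pr^2\times S_4=K_4$ has $\delta=3$, so it cannot arise in the $\delta_X\geq 4$ branch; the clean fact you are missing is that $\delta_X\geq 4$ together with the existence of $D$ forces $\rho_X\geq 6$, so no product can interfere with the case $\rho_X=5$. (Your appeal to \cite{cdue} in the $\delta_X=3$ case is superfluous, since $\rho_X\leq\delta_X+2\leq 5$ is already known at that point.)

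The paper closes this gap with a short pushforward argument, which handles all shapes of $D$ at once: since $\dim D=3$, at least one projection, say $\pi\colon X\to T$, is surjective on $D$; then $\pi_*(\N(D,X))=\N(T)$, while $\pi_{|D}$ cannot be finite, so $\N(D,X)\cap\ker\pi_*\neq 0$ and hence $\rho_T\leq\dim\N(D,X)-1=1$, i.e.\ $T\cong\pr^2$. Since $\delta_{S\times T}=\rho_S-1$ by \cite[Ex.~3.1]{codim} and $\delta_X\geq 4$, one gets $\rho_S\geq 5$ and $\rho_X\geq 6$. This simultaneously produces the alternative $X\cong\pr^2\times S$ of the statement and shows that $\rho_X=5$ only occurs with $\delta_X=3$, which is exactly what Theorem \ref{main} requires. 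If you replace your tentative case analysis by this argument (or a correct equivalent covering the cases $D=C\times T$, $D=S\times C$, and $D$ dominating both factors), your proof is complete and coincides with the paper's.
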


\medskip

Corollary \ref{rho2} is also related to the study of Fano $4$-folds having an elementary divisorial contraction 
$\sigma\colon X\to X'$ such that $\sigma(\Exc (\sigma))$ is a curve, because then
 automatically  $\dim\N(\Exc(\sigma),X)=2$. It follows from \cite{codim} that $\rho_X\leq 5$, and Tsukioka has classified the case $\rho_X=5$ when $\sigma$ is a smooth blow-up, as follows.
\begin{thm}[\cite{toru5}]\label{toru}
Let $X$ be a Fano $4$-fold obtained as a blow-up $\sigma\colon X\to X'$ of a smooth, irreducible curve in a smooth $4$-fold $X'$, and assume that $\rho_X=5$. Then $X$ is either the toric $4$-fold $K_3$, or Example \ref{e1}. In both cases $\Exc(\sigma)\cong\pr^1\times\pr^2$, $\ma{N}_{\Exc(\sigma)/X}\cong\ol_{\pr^1\times\pr^2}(-1,-1)$, and $X'$ is not Fano.
\end{thm}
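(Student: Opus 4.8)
The plan is to reduce the statement to the classification of Theorem~\ref{main} and then inspect the finitely many varieties that can occur. Since $\sigma\colon X\to X'$ is the blow-up of a smooth irreducible curve $C$ in the smooth $4$-fold $X'$, we have $\rho_{X'}=\rho_X-1=4$ and $E:=\Exc(\sigma)=\pr(\mathcal N_{C/X'}^{*})$ is a $\pr^2$-bundle over $C$. Let $\ell\subset E$ be a line in a fibre of $E\to C$ and $s\subset E$ a section of $E\to C$ (sections exist, a projective bundle over a curve having them). Then $\sigma_{*}[\ell]=0$ while $\sigma_{*}[s]=[C]\ne 0$, as an irreducible curve meets an ample divisor positively; hence $[\ell]$ and $[s]$ are linearly independent in $\N(X)$. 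Since $\rho_E=\rho_C+1=2$, this forces $\dim\N(E,X)=2$, so $\codim\N(E,X)=3$ and $\delta_X\ge 3$. On the other hand $\delta_X\le 3$: if $\delta_X\ge 4$ then $X\cong S\times T$ with $S,T$ del Pezzo surfaces and $\rho_S+\rho_T=\rho_X=5$ by \cite[Th.~1.1]{codim}, but for every prime divisor $D\subset S\times T$ (a pullback from $S$, a pullback from $T$, or a divisor dominating both factors) one checks $\codim\N(D,X)\le\max(\rho_S,\rho_T)-1\le\rho_S+\rho_T-2=3$, a contradiction. Therefore $\delta_X=3$, and Theorem~\ref{main} gives that $X$ is one of $K_1,K_2,K_3,K_4\cong\pr^2\times S_4$, or the $4$-folds of Examples~\ref{e1} and~\ref{e2}.

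It then remains to decide which of these six $4$-folds carries an elementary divisorial contraction that is a smooth blow-up of a curve. For $K_4\cong\pr^2\times S_4$ every divisorial extremal contraction is $\Id_{\pr^2}$ times the contraction of a $(-1)$-curve of $S_4$, so its exceptional divisor $\pr^2\times\pr^1$ maps to a surface, and $K_4$ is excluded. For the toric $4$-folds $K_1,K_2,K_3$ one reads the extremal contractions off the fans (cf.\ \cite{bat2,sato}) and checks that only $K_3$ admits an elementary divisorial contraction onto a smooth toric $4$-fold whose image is a curve, necessarily a toric smooth blow-up of that curve. Example~\ref{e1} is by construction the blow-up, in the smooth $4$-fold $X'=\Bl_{p,q,Q}\pr^4$, of the (smooth) transform of the line $\overline{pq}$, hence it occurs. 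Finally, Example~\ref{e2} must be ruled out directly: from the description $X=\Bl_{S,F_1,F_2}Z$ one computes the extremal rays of $X$ and verifies that the divisorial contractions it admits either have two-dimensional image — the contractions of the three exceptional divisors, with images $S$ and the surfaces $F_i\cong\pr^2$ — or fail to be smooth blow-ups; for instance the transform of the divisor $D$ contracted by $Z\to W$ has fibres over $\Sing W$ with normal bundle $\ol_{\pr^2}(-2)$ in $Z$, so it cannot be the exceptional divisor of a smooth blow-up.

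To conclude, for $X=K_3$ and $X=$ Example~\ref{e1} the curve $C$ is a $\pr^1$, and a direct computation shows $\mathcal N_{C/X'}\cong\ol_{\pr^1}(-1)^{\oplus 3}$ (for Example~\ref{e1}, the normal bundle $\ol_{\pr^1}(1)^{\oplus 3}$ of $\overline{pq}$ in $\pr^4$ has its degree reduced by $\dim X'-1=3$ at each of the two blown-up points $p,q$); hence $\Exc(\sigma)=\pr(\mathcal N_{C/X'}^{*})\cong\pr^1\times\pr^2$ and $\mathcal N_{\Exc(\sigma)/X}=\ol_{\Exc(\sigma)}(-1)$ has bidegree $(-1,-1)$. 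Moreover $X'$ is not Fano: for Example~\ref{e1}, writing $-K_{X'}=5H-3E_p-3E_q-E_Q$ one gets $-K_{X'}\cdot\Gamma=5-3-3-0=-1<0$ for $\Gamma$ the transform of $\overline{pq}$ in $X'$, and the analogous toric computation handles $K_3$. (A priori one also sees that $-K_{X'}\cdot\Gamma'=-K_X\cdot\widetilde{\Gamma'}+2\,(E\cdot\widetilde{\Gamma'})>0$ for every curve $\Gamma'\ne C$, so the only obstruction to $-K_{X'}$ being ample is the class $[C]$.)

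The main obstacle is the exclusion of Example~\ref{e2}: showing that this $4$-fold — which genuinely is a non-elementary composition of smooth blow-ups — carries no \emph{elementary} divisorial contraction realizing it as the smooth blow-up of a curve. If one prefers a proof not relying on Theorem~\ref{main}, one can instead follow the Mori-theoretic analysis of \cite{toru5}, making use of the structural results on Fano $4$-folds with $\delta_X=3$ and $\rho_X=5$ from \cite{M_R} (Theorem~\ref{factorization}) together with the positivity of $\mathcal N_{C/X'}^{*}$ along $C$ that the Fano condition on $X$ imposes.
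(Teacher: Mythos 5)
The paper contains no proof of this statement: Theorem \ref{toru} is quoted from Tsukioka \cite{toru5}, where it is proved directly, by a Mori-theoretic analysis valid in every dimension $\geq 4$, and logically prior to (and independently of) Theorem \ref{main}; in this paper it is only used as an input to Corollary \ref{31}. Your route --- showing $\dim\N(\Exc(\sigma),X)=2$, hence $\delta_X\geq 3$, excluding $\delta_X\geq 4$ via the product case, invoking Theorem \ref{main}, and then sieving the six varieties --- is therefore genuinely different. It is not circular inside this paper, since the proof of Theorem \ref{main} nowhere uses Theorem \ref{toru}, and it is essentially the argument of Corollary \ref{31} run with the extra smoothness requirement on $X'$; what it buys is a quick consistency check once the classification is available, while what the citation buys is a self-contained statement in arbitrary dimension, which your 4-fold-specific derivation cannot recover (and could not replace as a proof of the attributed result if the inputs to Theorem \ref{factorization} depended on \cite{toru5}).

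Two points in your sieve are asserted rather than proved, and they carry the real content. First, the exclusions of $K_1$, $K_2$ and of Example \ref{e2} rest on enumerations of divisorial extremal contractions that you do not carry out; they are correct --- the paper's proof of Corollary \ref{31} confirms that $K_1$ and Example \ref{e2} do admit elementary divisorial contractions onto curves, but with normal bundle restricting to $\ol_{\pr^2}(-2)$ on the $\pr^2$-fibers, hence not smooth blow-ups --- but a complete argument must list all divisorial rays of these $X$, and your phrase ``normal bundle $\ol_{\pr^2}(-2)$ in $Z$'' should refer to the normal bundle in $X$, i.e.\ $\ma{N}_{\w{D}/X}\cong\ol_{\pr^1\times\pr^2}(-1,-2)$ (its fiber restriction happens to agree with that of $\ma{N}_{D/Z}$, which is why your conclusion still stands). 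Second, the statement claims $\Exc(\sigma)\cong\pr^1\times\pr^2$ with normal bundle $\ol_{\pr^1\times\pr^2}(-1,-1)$ for \emph{every} such $\sigma$, so for $K_3$ and Example \ref{e1} you should also check that all smooth-blow-up structures along a curve, not just the constructed one, have $\ma{N}_{C/X'}\cong\ol_{\pr^1}(-1)^{\oplus 3}$; your explicit computations (the degree drop of the normal bundle of $\overline{pq}$ at $p$ and $q$, and $-K_{X'}\cdot C=-1$, so $X'$ is not Fano) are correct as far as they go.
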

In fact this result is shown in \cite{toru5} in arbitrary dimension $\geq 4$.
Here we extend the classification as follows.
\begin{corollary}\label{31}
Let $X$ be a Fano $4$-fold having an elementary divisorial contraction 
$\sigma\colon X\to X'$ such that $\sigma(\Exc(\sigma))$
is a curve, and assume that $\rho_X=5$. Then $X$ is one of the toric Fano $4$-folds $K_1$, $K_3$, or one of the $4$-folds of Examples \ref{e1} and \ref{e2}. In all cases, $\Exc(\sigma)\cong\pr^1\times\pr^2$, and $X'$ is not Fano.

In cases $K_1$ and  Example \ref{e2}, we have $\ma{N}_{\Exc(\sigma)/X}\cong\ol_{\pr^1\times\pr^2}(-1,-2)$, and $\sigma(\Exc(\sigma))$ is a curve of singular points.
\end{corollary}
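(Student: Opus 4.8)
The plan is to prove first that the hypotheses force $\delta_X=3$, so that Theorem~\ref{main} reduces the problem to a finite list of $4$-folds, and then to examine each of these and single out those admitting an elementary divisorial contraction onto a curve, computing in each case the exceptional divisor, its normal bundle, and whether the target is Fano. Set $E:=\Exc(\sigma)$ and $C:=\sigma(E)$, so $E$ is a prime divisor and $C$ a curve. Since $\sigma$ is elementary, $\ker(\sigma_{*}\colon\N(X)\to\N(X'))$ is one-dimensional, generated by the class of a curve contained in a fibre of $\sigma|_{E}$; as such curves lie in $E$, we have $\ker\sigma_{*}\subseteq\N(E,X)$, while $\sigma_{*}(\N(E,X))=\N(C,X')$ is spanned by $[C]\neq 0$, hence one-dimensional. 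Thus $\dim\N(E,X)=2$, as already recalled in the Introduction (this is the fact of \cite{codim} quoted there), and since $\rho_X=5$ we get $\codim\N(E,X)=3$, so $\delta_X\geq 3$. To obtain equality I rule out $\delta_X\geq 4$: by \cite[Th.~1.1]{codim} that would give $X\cong S\times T$ with $S$ a del Pezzo surface, hence $T$ a del Pezzo surface too (being a factor of a Fano); but then $\N(X)=\N(S)\oplus\N(T)$ and $\overline{\NE}(X)=\overline{\NE}(S)\oplus\overline{\NE}(T)$, so the extremal rays of $\overline{\NE}(X)$ are exactly those of the two factors, and every elementary contraction of $X$ has the form $c\times\Id_T$ or $\Id_S\times c$ with $c$ an elementary contraction of a factor; a divisorial such contraction is the blow-down of a $(-1)$-curve in one factor and maps its exceptional divisor onto a copy of the other factor, a surface --- contradicting that $\sigma(E)$ is a curve. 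Hence $\delta_X=3$, and by Theorem~\ref{main} $X$ is one of $K_1,K_2,K_3,K_4\cong\pr^2\times S_4$, or one of the $4$-folds of Examples~\ref{e1} and~\ref{e2}.

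Now I go through these six $4$-folds and determine, for each, all elementary divisorial contractions $\sigma$ with $\dim\sigma(\Exc(\sigma))=1$, using the explicit descriptions in \S\ref{explicit} and Table~\ref{table} --- the fans of the toric Fano $4$-folds from \cite{bat2,sato}, and the constructions of Examples~\ref{e1} and~\ref{e2}. One checks that $K_2$ has no such contraction, and that neither does $K_4\cong\pr^2\times S_4$ (again by the product argument: every divisorial contraction of $\pr^2\times S_4$ is the blow-down of a $(-1)$-curve in $S_4$, with exceptional divisor $\cong\pr^1\times\pr^2$ but mapped onto a copy of $\pr^2$, a surface). For $K_3$ and Example~\ref{e1} one checks that every such $\sigma$ is the smooth blow-up of a smooth curve in a smooth $4$-fold, so Theorem~\ref{toru} applies and gives $\Exc(\sigma)\cong\pr^1\times\pr^2$, $\mathcal{N}_{\Exc(\sigma)/X}\cong\ol_{\pr^1\times\pr^2}(-1,-1)$, and $X'$ not Fano. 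For $K_1$ and Example~\ref{e2} --- which by Theorem~\ref{toru} are not smooth blow-ups of curves --- one identifies the relevant contraction (for Example~\ref{e2} it is the one collapsing the transform of the divisor $D$ onto $\Sing W$, not any of the three blow-downs used to build $X$) and finds $\Exc(\sigma)\cong\pr^1\times\pr^2$ with $\mathcal{N}_{\Exc(\sigma)/X}\cong\ol_{\pr^1\times\pr^2}(-1,-2)$; since $\sigma$ then contracts each divisor $\{t\}\times\pr^2\subset\Exc(\sigma)$, $t\in\pr^1$ (which has normal bundle $\ol_{\pr^2}\oplus\ol_{\pr^2}(-2)$ in $X$), to a point, the target $X'$ is singular exactly along the curve $\sigma(\Exc(\sigma))$, and a direct computation shows $X'$ is not Fano.

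The substance of the argument --- and the main obstacle --- is this last step. For the toric $4$-folds $K_1,K_2,K_3$ one has to examine \emph{every} extremal ray of the fan, identify the divisorial contractions whose image is a curve, and compute the exceptional divisor with its normal bundle; the delicate point is $K_1$, where the contraction is divisorial but not a smooth blow-up. For Example~\ref{e2} the relevant contraction is the one least apparent from the construction and must be extracted from the geometry, after which one still has to verify that its exceptional divisor really is $\pr^1\times\pr^2$ with the asserted normal bundle. Once the contraction has been pinned down, checking that its image consists of singular points of $X'$ and that $X'$ is not Fano is routine.
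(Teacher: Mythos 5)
Your proposal follows essentially the same route as the paper: show $\dim\N(\Exc(\sigma),X)=2$, deduce $\delta_X=3$ and hence reduce to the list of Theorem~\ref{main} (the paper packages this step as Corollary~\ref{rho2}, handling $\delta_X\geq 4$ by a Picard-number count rather than your product argument, but both work), then sort the six varieties case by case, invoking Theorem~\ref{toru} for $K_3$ and Example~\ref{e1}, toric data for $K_1,K_2,K_4$, and the contraction of the transform of $D$ for Example~\ref{e2}. The one point where the paper supplies an argument that you only gesture at is the existence of that last contraction as an \emph{elementary} contraction of $X$: it checks that $-2K_X+\w{D}$ is nef and has intersection zero exactly with the curves in the fibers $\{pt\}\times\pr^2\subset\w{D}\cong\pr^1\times\pr^2$, so these classes span an extremal ray of $\NE(X)$; your ``one identifies the relevant contraction'' should be replaced by such an extremality verification.
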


\medskip

 Let us briefly discuss the strategy used to show Theorem \ref{main}.
We build on results from \cite{codim,eleonora,M_R}, which give a structure theorem for Fano $4$-folds $X$ with $\rho_X=5$ and $\delta_X=3$. More precisely, $X$ has always a flat fibration $X\to\pr^2$, that factors as $X\to Y\to\pr^2$, where the first map is a conic bundle, and the second one a $\pr^1$-bundle. We collect these results in Theorem \ref{factorization}.

We show that the fibration $X\to\pr^2$ has also a different factorization as $X\to Z\stackrel{\ph}{\to}\pr^2$, where $\ph$ is a $\pr^2$-bundle, and $X\to Z$ is the blow-up of three pairwise disjoint smooth surfaces $S_i\subset Z$, horizontal for $\ph$. When all the surfaces $S_i$ are sections of $\ph$, it turns out that $X$ is toric. Otherwise, we prove that two surfaces $S_i$ are always sections, and the third one has degree $2$ over $\pr^2$. In this case, we show that $X$ is one of Examples \ref{e1} or \ref{e2}.
\subsection*{Acknowledgments} We would like to thank Gianluca Occhetta and Luis E. Sol{\'a} Conde for many valuable discussions. The second author has been supported by the Polish National Science Center grant 2016/23/G/ST1/04282, and is grateful to the University of Torino for the kind hospitality and support provided during part of the preparation of this work.
\subsection*{Notations}
\noindent We work over the field of complex numbers. Let $X$ be a smooth projective variety. 

$\sim$ denotes linear equivalence for divisors.

$\mathcal{N}_{1}(X)$ is the $\mathbb{R}$-vector space of one-cycles with real coefficients, modulo numerical equivalence, and
 $\dim \mathcal{N}_{1}(X)=\rho_{X}$ is the Picard number of $X$.

We denote by $[C]$ the numerical equivalence class in $\mathcal{N}_{1}(X)$
of a one-cycle $C$ of $X$. 

 $\operatorname{NE}(X)\subset \mathcal{N}_{1}(X)$ is the convex cone generated by classes of effective curves. 

A \textit{contraction} of $X$ is a surjective morphism $\varphi\colon X\to Y$ with connected fibers, where $Y$ is normal and projective.
 
The \textit{relative cone} $\text{NE}(\varphi)$ of $\varphi$ is the convex subcone of $\text{NE}(X)$ generated by classes of curves contracted by $\varphi$.

A \textit{conic bundle} $h\colon X\to Y$ is a  fiber type contraction such that every fiber is one-dimensional and $-K_X$ is $h$-ample; then every fiber is isomorphic to a plane conic. 
\section{Proof of the main result}
\subsection{Preliminaries}
\noindent We begin by collecting in a unique statement the known results on the structure of Fano $4$-folds $X$ with $\rho_X=5$ and $\delta_X=3$. This is our starting point to prove Theorem \ref{main}.
\begin{thm}[\cite{codim,eleonora,M_R}] \label{factorization} 
Let $X$ be a Fano $4$-fold with $\rho_X=5$ and $\delta_X=3$.
Then there exists a diagram:
$$X\stackrel{f}{\la} X_{2}\stackrel{\psi}{\la} Y\stackrel{\xi}{\la}\pr^2$$
with the following properties:
\begin{enumerate}[$(a)$]
\item \label{Y} $Y\cong\pr_{\pr^2}(\ol\oplus\ol(a))$ with $a\in\{0,1,2\}$, and $\xi$ is the natural $\pr^1$-bundle;
\item \label{g} $\psi$ is a $\pr^1$-bundle;
\item \label{blowup} $f$ is the blow-up of two disjoint smooth, irreducible surfaces $B_1,B_2\subset X_2$;
\item \label{Ai} 
for $i=1,2$ set $A_i:=\psi(B_i)\subset Y$;
$A_1$ and $A_2$ are disjoint smooth surfaces, and
$A_1$ is a nef divisor;
\item \label{A2} if $a=0$, then $Y\cong\pr^1\times\pr^2$ and $A_i\cong\{pt\}\times\pr^2$
 for $i=1,2$; if $a\in\{1,2\}$, then $A_2$ is the negative section of  $\xi\colon Y\to\pr^2$ (namely $\ma{N}_{A_2/Y}\cong\ol_{\pr^2}(-a)$); in any case $A_2$ is a section of $\xi$ and $\xi_{|A_1}\colon A_1\to\pr^2$ is finite;
\item \label{Bi} for $i=1,2$ set   $T_i:=\psi^{-1}(A_i)\subset X_2$; 
 $B_i$ is a section of $\psi_{|T_i}\colon T_i\to A_i$, for $i=1,2$.
\end{enumerate}
\end{thm}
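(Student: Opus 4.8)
Theorem~\ref{factorization} collects results of \cite{codim,cdue,eleonora,M_R}, so the plan is to recall from those papers the relevant structural output and check that the pieces assemble into the stated diagram; I sketch the shape of the argument and flag what each step requires. \emph{Step~1: the fibration onto $\pr^2$.} Since $\delta_X=3$ and $\rho_X=5$, there is a prime divisor $D\subset X$ with $\dim\N(D,X)=2$. The study in \cite{codim} of the elementary contractions of a Fano manifold carrying such a divisor, refined for the case $\delta_X=3$ in \cite{cdue,eleonora}, produces a flat, equidimensional fibration $\eta\colon X\to\pr^2$ whose general fibre is a del Pezzo surface; this is the step where the hypothesis $\delta_X=3$ is used in an essential way. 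In particular the relative Picard number is $\rho(X/\pr^2)=\rho_X-\rho_{\pr^2}=4$.

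\emph{Step~2: factoring $\eta$.} As $\rho(X/\pr^2)=4$, any factorisation of $\eta$ into elementary contractions has four steps. Inspecting $\NE(\eta)$ and using that $-K_X$ is ample, one finds an elementary fiber-type contraction $h\colon X\to Y$ onto a smooth $3$-fold $Y$, which is a conic bundle, and an elementary contraction $\xi\colon Y\to\pr^2$; by the structure theorem $\xi$ is a $\pr^1$-bundle (so that $\rho_Y=2$, $\rho_{X_2}=3$, $\rho_X=5$, consistently). The decisive point, and the bulk of \cite{M_R}, is that every degenerate fibre of $h$ is reduced --- a pair of lines meeting at a point --- and that the discriminant of $h$ is a disjoint union $\Delta=A_1\sqcup A_2$ of two smooth surfaces in $Y$. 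Contracting, in each degenerate fibre, one of the two components then exhibits $h$ as $X\xrightarrow{f}X_2\xrightarrow{\psi}Y$ with $\psi$ a $\pr^1$-bundle and $f$ the blow-up of two disjoint smooth irreducible surfaces $B_i\subset T_i:=\psi^{-1}(A_i)$; this yields $(a)$--$(c)$ apart from the precise description of $Y$.

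\emph{Step~3: identifying $Y$ and the geometry of $A_i$, $B_i$.} Since $\xi$ is a $\pr^1$-bundle, $Y\cong\pr_{\pr^2}(\ma E)$ for a rank-$2$ bundle $\ma E$. One component, say $A_2=\psi(B_2)$, is a section of $\xi$; a section corresponds to a line-bundle quotient of $\ma E$, i.e.\ an exact sequence $0\to\ma L\to\ma E\to\ma M\to 0$ with $\ma L,\ma M$ line bundles on $\pr^2$, and since $\operatorname{Ext}^1_{\pr^2}(\ma M,\ma L)=H^1(\pr^2,\ma L\otimes\ma M^{-1})=0$ it splits; normalising, $Y\cong\pr_{\pr^2}(\ol\oplus\ol(a))$ with $a\ge0$ and $\ma N_{A_2/Y}\cong\ol_{\pr^2}(-a)$ (the negative section when $a>0$). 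The bound $a\le2$ comes from $X$ being Fano: for $a\ge3$ the anticanonical bundle of $Y$ is too negative to be compatible with $-K_X$ ample through the conic bundle $h$. The other component $A_1=\psi(B_1)$ is a prime divisor of $Y$, disjoint from $A_2$ (this disjointness, like that of $B_1$ and $B_2$, being part of Step~2); as $A_2$ meets every fibre of $\xi$, $A_1$ contains no fibre, so $\xi|_{A_1}\colon A_1\to\pr^2$ is finite and surjective, and $A_1$ is nef since $\NE(Y)$ is spanned by a $\xi$-fibre (on which $A_1$ is positive) and by a line of $A_2$ (on which $A_1$ vanishes, as $A_1\cap A_2=\varnothing$). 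When $a=0$ one has $Y\cong\pr^1\times\pr^2$, and a direct check on $\pr^1\times\pr^2$, using that $A_1$ is irreducible and that there is no nonconstant morphism $\pr^2\to\pr^1$, shows $A_1$ is likewise a constant section $\{pt\}\times\pr^2$. Finally $T_i=\psi^{-1}(A_i)$ is a $\pr^1$-bundle over $A_i$, and the conic-bundle form of $h$ forces $B_i\to A_i$ to be finite of degree $1$ and to contract no curve, hence to be an isomorphism onto $A_i$, i.e.\ a section of $\psi|_{T_i}$; this gives $(d)$--$(f)$.

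\emph{Main obstacle.} The substance of the theorem is Steps~1--2: extracting, from the purely numerical data $\delta_X=3$ and $\rho_X=5$, the rigid picture of a conic bundle over a $\pr^1$-bundle over $\pr^2$ with reduced degenerate fibres and disjoint discriminant components. This is the deep content of \cite{codim,cdue,eleonora,M_R}, resting on the theory of the Lefschetz defect, the classification of the elementary contractions of $X$, control of the families of rational curves they produce, and a case analysis of the possible runs of the Minimal Model Program; it is not recoverable in a few lines. Once this skeleton is granted, Step~3 is routine bookkeeping with $\pr^1$-bundles, conic bundles and blow-ups.
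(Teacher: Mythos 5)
Your overall strategy coincides with the paper's: Theorem \ref{factorization} is proved there purely by assembling results from the cited literature (the diagram together with (b), (c) and the fact that $\psi\circ f$ is a conic bundle come from \cite[Th.~1.1 and its proof]{codim}, item (f) from \cite[Prop.~3.5(1)]{eleonora}, and items (a), (d), (e) from \cite[Prop.~1.2 and its proof]{M_R}), and your Steps 1--2 do essentially the same, up to a slight misattribution (the conic-bundle skeleton with its two-step factorization is already in \cite{codim}/\cite{eleonora}, while \cite{M_R} is responsible for the description of $Y$, $A_1$, $A_2$).

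The problem is your Step 3, which you present as ``routine bookkeeping'' but which is precisely the content the paper delegates to \cite[Prop.~1.2]{M_R}, and your sketches of it have genuine gaps. (i) You assert without argument that $\xi$ is a $\pr^1$-bundle and that the discriminant component $A_2=\psi(B_2)$ is a section of $\xi$; neither is formal --- a priori $Y\to\pr^2$ is only an elementary fiber-type contraction of a smooth $3$-fold with one-dimensional fibers, and a priori both components of the discriminant could be multisections. (ii) Granting a section, your $\operatorname{Ext}^1$-vanishing argument does give $Y\cong\pr_{\pr^2}(\ol\oplus\ol(a))$, but it does not give $\ma{N}_{A_2/Y}\cong\ol_{\pr^2}(-a)$: the section corresponds to an exact sequence $0\to\ma{L}\to\ma{E}\to\ma{M}\to 0$ with normal bundle $\ma{L}^{\vee}\otimes\ma{M}$, and nothing in your argument rules out that $A_2$ is the ``positive'' section; identifying it as the negative one is part of what must be proved. (iii) The bound $a\leq 2$ is justified only by ``for $a\geq 3$ the anticanonical bundle of $Y$ is too negative to be compatible with $-K_X$ ample'', which is not a proof; this bound is one of the main points of \cite[Prop.~1.2(a)]{M_R}. (iv) Minor: $h=\psi\circ f$ is not an elementary contraction, since $\rho_X-\rho_Y=3$, as your own Picard-number count shows. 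By contrast, your arguments for the nefness of $A_1$ (once $A_2$ is known to be the negative section), for the case $a=0$, and for $B_i$ being a section of $\psi_{|T_i}$ (this is exactly \cite[Prop.~3.5(1)]{eleonora}) are fine in outline. So the route is the same as the paper's, but items (a), (d), (e) cannot be dismissed as bookkeeping: either cite \cite[Prop.~1.2]{M_R} for them, as the paper does, or supply the missing arguments.
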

\begin{proof}
The existence of the diagram, together with properties ($\ref{g}$) and ($\ref{blowup}$), and the additional fact that $\psi\circ f\colon X\to Y$ is a conic bundle,
are shown in \cite[Th.~1.1 and its proof, in particular 3.3.15-3.3.17]{codim}.  Then applying \cite[Prop.~3.5(1)]{eleonora} to $\psi\circ f$ we get ($\ref{Bi}$).
Finally ($\ref{Y}$) is \cite[Prop.~1.2(a)]{M_R}, and ($\ref{Ai}$) and ($\ref{A2}$) are proved in \cite[proof of Prop.~1.2]{M_R}.
\end{proof}
We note that after ($\ref{Ai}$) and ($\ref{A2}$), the role of the two surfaces $A_1$ and $A_2$ is not symmetric if $a>0$.

\medskip

In the toric case, the classification is already known, and
 relies on Batyrev's classification of toric Fano $4$-folds \cite{bat2}, see \cite[Prop.~5.1]{M_R}. We follow the notation of \cite{bat2}.
\begin{proposition} \label{toric_case}
There are four toric Fano $4$-folds $X$ with $\rho_X=5$ and $\delta_X=3$; they are the $4$-folds $K_1$, $K_2$, $K_3$, and $K_4$.
\end{proposition}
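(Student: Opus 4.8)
The plan is to reduce the statement to a finite verification on the classification of smooth toric Fano $4$-folds. By Batyrev \cite{bat2}, corrected by Sato \cite{sato}, there is an explicit finite list of all toric Fano $4$-folds, each described by its fan; from it one extracts the sublist of those with $\rho_{X}=5$. For every such $X$ one then has to compute $\delta_{X}$ and decide whether it equals $3$. First observe that $\delta_{X}\ge 4$ cannot occur here: by \cite[Th.~1.1]{codim} it would force $X\cong S\times T$ with $S$ a del Pezzo surface with $\rho_{S}=\delta_{X}+1\ge 5$, whence $\rho_{X}=\rho_{S}+\rho_{T}\ge 6$, contrary to $\rho_{X}=5$. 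So for each of the finitely many $X$ in the list one is left to distinguish $\delta_{X}=3$ from $\delta_{X}\le 2$.

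For a smooth complete toric variety $X=X_{\Sigma}$ of dimension $n$ this is a combinatorial computation. The space $\N(X)$ is generated by the classes of the torus-invariant curves $V(\tau)$, $\tau\in\Sigma(n-1)$; and for a torus-invariant prime divisor $D_{\rho}=V(\rho)$, $\rho\in\Sigma(1)$, the variety $D_{\rho}$ is itself the complete toric variety whose fan is the star of $\rho$, so that
$$\N(D_{\rho},X)=\bigl\langle\,[V(\tau)]\ :\ \tau\in\Sigma(n-1),\ \rho\preceq\tau\,\bigr\rangle\subseteq\N(X).$$
Moreover, in the maximum defining $\delta_{X}$ one may restrict to torus-invariant prime divisors: this reduction is standard, the point being that a non-invariant prime divisor $D$ satisfies $\overline{T\cdot D}=X$. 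Hence $\delta_{X}=\max_{\rho\in\Sigma(1)}\codim\N(D_{\rho},X)$, and each such codimension is a rank computation in linear algebra: the intersection numbers $[V(\tau)]\cdot D_{\rho'}$ are read off from the primitive ray generators of $\Sigma$ via the wall-crossing relations, which presents $\N(X)$ together with the explicit spanning set above. For $K_{1},\dots,K_{4}$ it suffices to exhibit a single invariant prime divisor $D$ with $\codim\N(D,X)=3$ — equivalently one with $\rho_{D}=2$ — giving $\delta_{X}\ge 3$ and hence $\delta_{X}=3$; for each remaining toric Fano $4$-fold with $\rho_{X}=5$ one checks instead that $\codim\N(D_{\rho},X)\le 2$ for every ray $\rho$.

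Running this bounded verification over the $\rho_{X}=5$ part of Batyrev's list leaves exactly $K_{1},K_{2},K_{3},K_{4}$; this recovers \cite[Prop.~5.1]{M_R}, which one may follow for the detailed bookkeeping. The obstacle is organisational rather than conceptual: one must work with the corrected list (hence also the citation of \cite{sato}) and manage a moderately long family of fans and matrices without slips. The only genuinely non-routine ingredient is the reduction to torus-invariant prime divisors in the computation of $\delta_{X}$; granting that, the proposition follows by inspection.
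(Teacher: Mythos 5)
Your route is the same as the paper's: the paper does not reprove this statement but simply quotes it from \cite[Prop.~5.1]{M_R}, which rests on the Batyrev--Sato list \cite{bat2,sato}, and your plan is to redo that finite verification. Half of your argument is indeed complete as sketched: since $\delta_X\ge 4$ would force $X\cong S\times T$ with $\rho_S\ge 5$ by \cite[Th.~1.1]{codim}, hence $\rho_X\ge 6$, it suffices for each of $K_1,\dots,K_4$ to exhibit one invariant prime divisor $D$ with $\rho_D=2$ to conclude $\delta=3$ there.

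The gap is exactly in the ingredient you single out and then ``grant'': the claim that $\delta_X$ of a toric variety is computed on torus-invariant prime divisors, which you need in order to certify $\delta_X\le 2$ for every \emph{other} toric Fano $4$-fold with $\rho_X=5$. The justification you offer --- that a non-invariant prime divisor satisfies $\overline{T\cdot D}=X$ --- proves nothing: a divisor whose torus translates cover $X$ can still have $\N(D,X)\subsetneq\N(X)$. For instance, in $X=(\pr^1)^4$ let $D$ be the pullback of the diagonal under the projection to the first two factors; $D$ is a non-invariant prime divisor, its translates cover $X$, yet every curve in $D\cong\pr^1\times\pr^1\times\pr^1$ has class in $\langle e_1+e_2,\,e_3,\,e_4\rangle$, so $\codim\N(D,X)=1$. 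Nor does the obvious repair (specializing $D$ to an invariant cycle by a one-parameter subgroup) work as stated: in the same example the class $h_1-h_2$ is numerically trivial on $D$ but not on either invariant component of the flat limit of $D$, so numerical triviality does not pass to the limit components. Thus the inequality $\delta_X\le\max_{\rho}\codim\N(D_\rho,X)$ over invariant divisors requires a genuine argument or a precise reference; without it your ``inspection'' only bounds the invariant part and cannot exclude $\delta=3$ for the remaining varieties in the $\rho=5$ list. Acceptable fixes: quote \cite[Prop.~5.1]{M_R} outright, as the paper does; or replace the combinatorial check of the exclusion step by the structure theory --- a Fano $4$-fold with $\delta_X\ge 3$ carries the conic-bundle/$\pr^1$-bundle factorization of Theorem~\ref{factorization} (from \cite{codim,eleonora,M_R}), and one can test via primitive relations which toric Fano $4$-folds with $\rho=5$ admit it; or supply an actual proof of the invariant-divisor reduction rather than the covering-translates remark.
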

\subsection{Proof of Theorem \ref{main}}
We keep the same notation as in Th.~\ref{factorization}.
\begin{step}\label{Z}
We can assume that there exists a commutative diagram:
$$\xymatrix{{X_2}\ar[r]^\psi\ar[d]_g&Y\ar[d]^{\xi}\\
Z\ar[r]^{\ph} &{\pr^2} 
} $$
where $\ph\colon Z\to\pr^2$ is a $\pr^2$-bundle, $g$ is the blow-up along a section $S_3\subset Z$ of $\ph$, $E:=\Exc(g)$ is a section of $\psi$, and $E\cap (B_1\cup B_2)=\emptyset$.
\end{step}
\begin{proof}
Consider the natural factorization of $f$ as a sequence of two blow-ups:
$$X\stackrel{f_1}{\la} X_{1}\stackrel{f_2}{\la} X_2$$
where $f_2$ is the blow-up of $B_2$ and $f_1$ is the blow-up of the transform  of $B_1$.

Let us consider the morphism $\zeta:=\xi\circ \psi\circ f_2\colon X_1\to\pr^2$. 
Since both $\psi$ and $\xi$ are smooth by Th.~\ref{factorization}($\ref{Y}$)-($\ref{g}$), the composition $\xi\circ \psi \colon X_2\to\pr^2$ is smooth. Moreover, since $A_2\subset Y$ is a section of $\xi$, and the center $B_2$ of the blow-up $f_2\colon X_1\to X_2$ is a section over $A_2$ (see Th.~\ref{factorization}($\ref{A2}$) and ($\ref{Bi}$)), we conclude that $B_2$ is a section of  $\xi\circ \psi \colon X_2\to\pr^2$. This implies that $\zeta\colon X_1\to\pr^2$
is smooth.

We show that $-K_{X_1}$ is 
$\zeta$-ample. Let $C\subset X_1$ be an irreducible curve such that $-K_{X_1}\cdot C\leq 0$. 
If $\w{C}\subset X$ is its transform, we have $-K_X\cdot\w{C}>0$ which implies that $\Exc(f_1)\cdot \w{C}<0$, hence
 $C\subset f_1(\Exc(f_1))$, $f_2(C)\subset B_1$, and $\psi(f_2(C))\subset A_1$. Since $\psi$ is finite on $B_1$ and $\xi$ is finite on $A_1$ by Th.~\ref{factorization}($\ref{Bi}$) and ($\ref{A2}$), we conclude that $\zeta(C)$ is a curve. This shows that $-K_{X_1}$ is positive on every curve contracted by $\zeta$. Being $X$ Fano, the cone $\NE(X)$ is closed and polyhedral, and this easily implies that $\NE(X_1)$ is closed. By the relative Kleiman's criterion, 
 $-K_{X_1}$ is 
$\zeta$-ample. 

Hence $\zeta\colon X_1\to\pr^2$
is a smooth contraction of relative Picard number $3$ with $-K_{X_1}$ relatively ample, and
every fiber of $\zeta$ is isomorphic to the del Pezzo surface $S$ with $\rho_S=3$, the blow-up of $\pr^2$ in two points. 

If $i\colon S\hookrightarrow X_1$ is the inclusion of a fiber, the pushforward of $1$-cycles $i_*\colon \N(S)\to\N(X_1)$ is injective, and yields and isomorphism $\N(S)\cong\ker\zeta_*$. It is clear that $i_*(\NE(S))\subseteq\NE(\zeta)$; conversely it follows from \cite[Prop.~1.3]{wisndef} that equality holds, so that every contraction of the fiber $S$ extends to a global contraction of $X_1$ over $\pr^2$.
Therefore the sequence of elementary contractions:
$$\xymatrix{S\ar[r]&{\mathbb{F}_1}\ar[d]\ar[r]&{\pr^1}\ar[d]\\
&{\pr^2}\ar[r]&
\{pt\}}$$
yields a corresponding factorization of $\zeta$:
$$\xymatrix{{X_1}\ar[r]^{f_2'}&{X_2'}\ar[d]_g\ar[r]^{\psi'}&{Y'}\ar[d]^{\xi'}\\
&{Z}\ar[r]^{\ph}&{\pr^2}}$$
We have:
\begin{enumerate}[$\bullet$]
\item
$\xi'\colon Y'\to\pr^2$ and $\psi'\colon X_2' \to Y'$ are  $\pr^1$-bundles, and $\ph\colon Z\to\pr^2$ is a  $\pr^2$-bundle;
\item $g$ is the blow-up of a smooth surface $S_3\subset Z$ which is a section of $\ph$;
\item
 $f_2'$ is the blow-up of a smooth surface $B_2'\subset X_2'$ which is a section of $\ph\circ g\colon X_2'\to \pr^2$, and is disjoint from $E:=\Exc(g)$;
\item
 $E$ is a section of $\psi'\colon X_2'\to Y'$.
\end{enumerate}
Notice also that the center of the blow-up $f_1\colon X\to X_1$ cannot meet any $(-1)$-curve in the fiber $S$, otherwise $X$ would not be Fano. Hence $B_1':=f_2'(f_1(\Exc(f_1)))$ is disjoint from $E$.

Finally, using \cite[Prop.~1.2(a)]{M_R}, we still get $Y'\cong\pr_{\pr^2}(\ol\oplus\ol(a))$ with $a\in\{0,1,2\}$, so that we can replace the original factorization of $\zeta$ with the new one keeping all the previous properties.
\end{proof}
Set $S_i:=g(B_i)\subset Z$ for $i=1,2$.
Then $S_1$, $S_2$, and $S_3$ are pairwise disjoint smooth surfaces, and 
 $X$ is the blow-up of $Z$ along $S_1\cup S_2\cup S_3$. We set $Z_p:=\ph^{-1}(p)$ for every $p\in\pr^2$. Moreover we denote by $d$ the degree of the finite morphism $\xi_{|A_1}\colon A_1\to\pr^2$
(see Th.~\ref{factorization}$(\ref{A2})$). 
\begin{step}\label{d}
 $S_2$ is a section of $\ph$, and $\ph_{|S_1}\colon S_1\to\pr^2$ is finite of degree $d$.
\end{step}
\begin{proof}
For $i=1,2$, since $B_i$ is a section over $A_i$ by Th.~\ref{factorization}$(\ref{Bi})$, the degree of $S_i$ over $\pr^2$ is equal to the degree of $A_i$ over $\pr^2$; in particular $S_2$ is a section of $\ph$ by Th.~\ref{factorization}$(\ref{A2})$.
\end{proof}
\begin{step}\label{points}
The points  $(S_1\cup S_2\cup S_3)\cap Z_p$ (with the reduced structure) are in general linear position in $Z_p\cong\pr^2$, for every $p\in\pr^2$.

{\em Indeed, if
 there were three of them on a line $\ell$, the transform of $\ell$ in $X$ would have non-positive anticanonical degree.}
\end{step}
\begin{step}\label{K}
If $d=1$, then 
 $X$ is toric, and it is one of the $4$-folds $K_1$, $K_2$, $K_3$, or $K_4$, in the notation of \cite{bat2}.
\end{step}
\begin{proof}
If $d=1$, then
$\ph\colon Z\to\pr^2$ has three pairwise disjoint sections $S_i$, which 
are fiberwise in general linear position, 
by Steps \ref{Z}, \ref{d}, and \ref{points}. 
This implies that $Z\cong \pr_{\pr^2}(\ma{L}_1\oplus\ma{L}_2\oplus\ma{L}_3)$ in such a way that the three sections $S_i$ correspond to the projections
$\ma{L}_1\oplus\ma{L}_2\oplus\ma{L}_3\twoheadrightarrow \ma{L}_i$.
This means that $Z$ is a toric $4$-fold, and that  $S_1$, $S_2$, and $S_3$ are invariant for the torus action, so that $X$ is toric. Then the statement follows from Prop.~\ref{toric_case}.
\end{proof}
From now on we assume  that  $d\geq 2$, in particular this implies that
$a\in\{1,2\}$
by Th.~\ref{factorization}($\ref{A2}$). 

For $q_1,q_2\in Z_p$ distinct points, we denote by $\overline{q_1q_2}\subset Z_p$ the line spanned by $q_1$ and $q_2$.
\begin{step}\label{H}
Let $H\subset Z$ be the relative secant variety of $S_1$ in $Z$, namely the
closure in $Z$ of the set:
$$\bigl\{\overline{q_1q_2}\subset Z_p\,|\,q_1,q_2\in S_1\cap Z_p,q_1\neq q_2,p\in\pr^2\bigr\}.$$
 For $p$ general, we have $|S_1\cap Z_p|=d\geq 2$, so that  $H$ is non-empty.
 It is not difficult to see that
$\dim H=3$, and Step \ref{points}  implies that $H\cap(S_2\cup S_3)=\emptyset$.
\end{step}

Recall that $T_2=\psi^{-1}(A_2)\subset X_2$ and that $E=\Exc(g)\subset X_2$ (see Th.~\ref{factorization}($\ref{Bi}$) and Step \ref{Z}).
\begin{step}\label{D}
Set $D:=g(T_2)\subset Z$. Then $T_2\cong D\cong\pr^1\times\pr^2$, and $D\cap Z_p$ is a line in $Z_p$ for every $p\in\pr^2$. Moreover $D$ contains $S_2$ and $S_3$,  while $D\cap S_1=\emptyset$.
\end{step}
\begin{proof}
We have a commutative diagram:
$$\xymatrix{{T_2}\ar[d]_{g_{|T_2}}\ar[r]^{\ \psi_{|T_2}}&
{A_2}\ar[d]^{\xi_{|A_2}}\\
D\ar[r]_{\ph_{|D}}&{\pr^2}
}$$
where the vertical  maps are isomorphisms by Step \ref{Z} and Th.~\ref{factorization}($\ref{A2}$), and the  horizontal  maps are $\pr^1$-bundles.

We also have $S_3=g(E\cap T_2)\subset D$; moreover $B_2\subset T_2$ by Th.~\ref{factorization}($\ref{Ai}$), hence $S_2=g(B_2)\subset D$.
By Steps \ref{Z} and \ref{d}, $S_2$ and $S_3$ are disjoint sections of the $\pr^1$-bundle $\ph_{|D}\colon D\to\pr^2$; this implies that  $D\cong\pr_{\pr^2}(\ol\oplus\ol(c))$ for some $c\in\Z$.

Now we have $H\cap D\neq\emptyset$, because both  contain a line in $Z_p$, so that $H\cap D$ yields a non-zero effective divisor in $D$. On the other hand, this divisor is disjoint from both sections $S_2$ and $S_3$ by Step \ref{H}. This easily implies that $c=0$ and $D\cong\pr^1\times\pr^2$.

Finally, we note that for every $p\in\pr^2$, $D\cap Z_p$ is the line spanned by the points $S_2\cap Z_p$ and $S_3\cap Z_p$, so that $D\cap S_1=\emptyset$ by Step \ref{points}.
\end{proof}
We denote by $L\in\Pic(Y)$ the tautological line bundle for $Y=\pr_{\pr^2}(\mathcal{O}\oplus \mathcal{O}(a))$; note that $L$ is nef and big, and $L_{|A_2}\cong\ol_{\pr^2}$ by Th.~\ref{factorization}($\ref{A2}$).
\begin{step}\label{normal}
There exists $b\in\Z$ such that ${\ma{N}}_{S_3/Z}^{\vee}\cong\mathcal{O}_{\pr^2}(b)\oplus \mathcal{O}_{\pr^2}(a+b)$ 
and $\ma{N}^{\vee}_{E/X_2}\cong L\otimes\xi^*(\ol_{\pr^2}(b))\in\Pic(Y)$. 
\end{step}
\begin{proof}
By Step \ref{Z} we have a commutative diagram:
$$\xymatrix{{E}\ar[d]\ar[r]^{\psi_{|E}}& Y\ar[d]^{\xi}\\
{S_3}\ar[r]^{\ph_{|S_3}}&{\pr^2}
}$$
where the horizontal maps are isomorphisms, and the vertical maps are $\pr^1$-bundles.
Since $g\colon X_2\to Z$ is the blow-up of $S_3$, using Th.~\ref{factorization}($\ref{Y}$) we get:
$$\mathbb{P}_{S_3}({\ma{N}}_{S_3/Z}^{\vee})\cong E\cong Y\cong \mathbb{P}_{\mathbb{P}^{2}}(\mathcal{O}\oplus \mathcal{O}(a)),$$ 
therefore ${\ma{N}}_{S_3/Z}^{\vee}\cong\mathcal{O}_{\pr^2}(b)\oplus \mathcal{O}_{\pr^2}(a+b)$, with $b\in \mathbb{Z}$. Moreover $\ma{N}^{\vee}_{E/X_2}$ is the tautological line bundle of $\mathbb{P}_{\mathbb{P}^{2}}(\mathcal{O}(b)\oplus \mathcal{O}(a+b))$, which gives the statement.
\end{proof}
\begin{step}\label{b=0}
We have $b=0$, $X_2\cong \pr_Y(\ol\oplus{L})$,
 and $E$ corresponds (as a section of $\psi$) to the projection $\ol\oplus{L}\twoheadrightarrow\ol$.
\end{step}
\begin{proof}
Let $\ma{E}$ be a rank $2$ vector bundle on $Y$ such that $X_2=\pr_Y(\ma{E})$.
We know by Step \ref{Z} that $E$ is a section of $\psi$; this section corresponds to a surjection $\sigma\colon \mathcal{E}\twoheadrightarrow \mathcal{F}$ with $\mathcal{F}\in\Pic(Y)$, and up to replacing $\mathcal{E}$ with $\mathcal{E}\otimes \ma{F}^{\vee}$ we may assume that $\ma{F}=\mathcal{O}_Y$, so that $\ker\sigma\cong \ma{N}_{E/X_2}^\vee\cong L\otimes\xi^*(\ol_{\pr^2}(b))$  by Step \ref{normal}.  We  obtain the following exact sequence over $Y$:
\begin{equation}\label{sequence}
0\longrightarrow \ker\sigma\longrightarrow \mathcal{E}\longrightarrow \mathcal{O}_Y\longrightarrow 0.\end{equation}

Now let us consider $A_2\subset Y$; 
we have $\ker\sigma_{|A_2}\cong 
L_{|A_2}\otimes\xi^*(\ol_{\pr^2}(b))_{|A_2}\cong\ol_{\pr^2}(b)$, so by restricting to $A_2$ the above exact sequence  we get:
$$0\longrightarrow \ol(b)\longrightarrow \mathcal{E}_{|A_2}\longrightarrow \mathcal{O}\longrightarrow 0.$$
On the other hand $\pr_{A_2}(\mathcal{E}_{|A_2})=T_2\cong \pr^2\times\pr^1$ by Step \ref{D}, and we deduce that $b=0$ and $\ker\sigma\cong L$.

We note that $Y$ is Fano, and $L$ is nef and big, so that $-K_Y+L$ is ample.
Therefore $\text{Ext}^1(\mathcal{O}_Y,\ker\sigma)\cong H^1(Y,L)=H^1(Y,K_Y-K_Y+L)=0$ by Kodaira vanishing, 
hence  the  sequence \eqref{sequence} splits, so that $\mathcal{E}\cong\mathcal{O}_Y\oplus L$.
\end{proof}
\begin{step}\label{section}
There exists a section $K$ of $\psi\colon X_2\to Y$ containing $B_1$ and disjoint from $E$.
\end{step}
\begin{proof}
By Th.~\ref{factorization}($\ref{Bi}$) and Step \ref{b=0}
we have   $B_1\subset T_1=\pr_{A_1}(\ol_{A_1}\oplus L_{|A_1})$ and 
$B_1$ is a section of  $\psi_{|T_1}\colon T_1\to A_1$. Moreover by Steps \ref{Z} and \ref{b=0} we deduce that
$T_1\cap E$ is another section of  $\psi_{|T_1}$, disjoint from $B_1$, and 
corresponding to the projection
$\ol_{A_1}\oplus L_{|A_1}\to\ol_{A_1}$.
Then it is not difficult to see that $B_1$ corresponds, as a section, to a surjection $\tau\colon\ol_{A_1}\oplus L_{|A_1}\twoheadrightarrow L_{|A_1}$.

Let us consider the restriction $r\colon \text{Hom}(\ol_Y\oplus L,L)\to \text{Hom}(\ol_{A_1}\oplus L_{|A_1},L_{|A_1})$.
We have
$$
\text{Hom}(\ol_{A_1}\oplus L_{|A_1},L_{|A_1})\cong \text{Hom}(L_{|A_1}^{\vee}\oplus \ol_{A_1},\ol_{A_1})\cong 
 H^0({A_1}, L_{|A_1})\oplus H^0({A_1},\ol_{A_1}),
$$
and similarly $\text{Hom}(\ol_{Y}\oplus L,L)\cong H^0(Y, L)\oplus H^0(Y,\ol_{Y})$. Since the restriction $H^0(Y,\ol_Y)\to H^0(A_1,\ol_{A_1})$ is an isomorphism, $r$ is surjective if the restriction $H^0(Y, L)\to H^0(A_1, L_{|A_1})$ is. 

We have an exact sequence of sheaves on $Y$:
$$0\la L\otimes\ol_Y(-A_1)\la L\la L_{|A_1}\la 0.$$
Since $A_1\cap A_2=\emptyset$ by Th.~\ref{factorization}($\ref{Ai}$),
and $A_1$ has degree $d$ over $\pr^2$, it is not difficult to see that $\ol_Y(A_1)= L^{\otimes d}$ in $\Pic(Y)$.
Then, using Serre duality and Kawamata-Viehweg vanishing:
$$H^1(Y, L\otimes\ol_Y(-A_1))=H^1(Y,L^{\otimes (1-d)})=H^2(Y,K_Y\otimes L^{\otimes (d-1)})=0$$
because $d\geq 2$ and $L$ is nef and big.

We conclude that $r$ is surjective, so that $\tau$ extends to a morphism $\overline{\tau}\colon \ol_Y\oplus L\to L$.

We show that $\overline{\tau}$ is surjective. By contradiction, suppose that $\im\overline{\tau}\subsetneq L$; then $\im\overline{\tau}\cong L\otimes\ol_Y(-D_0)$ with $D_0$ a non-zero effective divisor, and $\ker\overline{\tau}\cong\ol_Y(D_0)$. 

We have $\Hom(L,L\otimes\ol_Y(-D_0))=0$, hence $\ker\overline{\tau}\supseteq\{0\}\oplus L$. On the other hand $\Hom(\ol_Y(D_0),\ol_Y)=0$, hence $\ker\overline{\tau}\subseteq \{0\}\oplus L$. We conclude that $\ker\overline{\tau}= \{0\}\oplus L$ and $\overline{\tau}$ factors through the projection $\ol_Y\oplus L\twoheadrightarrow \ol_Y$. Then the same happens by restricting to $A_1$, which is impossible, because $\tau$ is surjective.

Thus we have a surjection $\overline{\tau}\colon\ol_Y\oplus L\twoheadrightarrow L$ which yields a section $K\subset X_2$ extending $B_1$.

We show that $K\cap E=\emptyset$. Let us consider the projection $\ol_Y\oplus L\twoheadrightarrow L$ and the corresponding section $K'\subset X_2$. Since $E$ corresponds to the projection onto the other summand, we have $K'\cap E=\emptyset$. On the other hand, it is easy to check that $K\sim K'$ in $X_2$, hence for every curve $C\subset E$ we have $K\cdot C=0$. Since $K\neq E$, this implies that $K\cap E=\emptyset$.
\end{proof}
\begin{step}\label{d=2}
We have $d=2$ and $H=g(K)$.
\end{step}
\begin{proof}
Consider $g(K)\subset Z$, so that $K\cong g(K)$ and 
$g(K)\supset S_1$ by Step \ref{section}. If $p\in\pr^2$ is general, 
then $g^{-1}(Z_p)\cong\mathbb{F}_1$, and $K\cap g^{-1}(Z_p)$ is a section of $\mathbb{F}_1\to\pr^1$, disjoint from the $(-1)$-curve $E\cap g^{-1}(Z_p)$.
Thus $g(K)\cap Z_p$ is a line in $Z_p\cong\pr^2$, and this line contains the $d$ points $S_1\cap Z_p$. Since these points  are in general linear position (see Step \ref{points}), we conclude that $d=2$.  We also deduce that
$g(K)=H$ (see Step \ref{H}). 
 \end{proof}
\begin{step} \label{S3}
We have $Z\cong\pr_{\pr^2}(\mathcal{O}^{\oplus 2}\oplus \mathcal{O}(a))$, 
and under the isomorphism $D\cong\pr^1\times \pr^2$ one has $\ol_Z(D)_{|\{\text{pt}\}\times\pr^2}\cong\ol_{\pr^2}(-a)$.
\end{step}
\begin{proof}
 We know by Steps \ref{Z}, \ref{normal} and \ref{b=0} that $S_3$ is a section of $\ph\colon Z\to \mathbb{P}^{2}$ with conormal bundle $\ol\oplus\ol(a)$. 
As in the proof of Step \ref{b=0}, using that $H^{1}(\mathbb{P}^{2}, \mathcal{O}_{\mathbb{P}^{2}}(c))=0$ for every $c\in \mathbb{Z}$, one shows that $Z\cong\pr_{\pr^2}(\mathcal{O}\oplus \mathcal{O}\oplus \mathcal{O}(a))$.

Recall from Step \ref{D} that $D\cong\pr^1\times\pr^2$ and $S_3\cong\{\text{pt}\}\times\pr^2$. Thus
 $\ma{N}_{S_3/Z}\cong\ol_{\pr^2}\oplus\ol_{\pr^2}(-a)$ and $\ma{N}_{S_3/D}\cong\ol_{\pr^2}$; using the exact sequence on $S_3$:
$$0\la \ma{N}_{S_3/D}\la \ma{N}_{S_3/Z}\la \ma{N}_{D/Z|S_3}\la 0$$
we conclude that $\ma{N}_{D/Z|S_3}\cong\ol_{\pr^2}(-a)$.
\end{proof}
\begin{step}\label{Ex1}
If $a=1$, then $X$ is the $4$-fold of Example \ref{e1}.
\end{step}
\begin{proof}
Set $a=1$. Applying Step \ref{S3} we have
 $Z\cong\pr_{\pr^2}(\mathcal{O}^{\oplus 2}\oplus  \mathcal{O}(1))$, that is the blow-up of $\pr^4$ along a line, with exceptional divisor $D$; moreover $S_2$ and $S_3$ are non-trivial fibers of the blow-up $Z\to\pr^4$. The image of $H$ (see Steps \ref{H} and \ref{d=2}) in $\pr^4$ is a hyperplane, and
the image of $S_1$ is a smooth quadric surface. This is Example \ref{e1}.
\end{proof}
\begin{step}\label{Ex2}
If $a=2$, then $X$ is the $4$-fold of Example \ref{e2}.
\end{step}
\begin{proof}
Set $a=2$. Using Step \ref{S3} one has $Z\cong\pr_{\pr^2}(\mathcal{O}^{\oplus 2} \oplus \mathcal{O}(2))$, then 
there is a divisorial contraction $Z\to W$ sending $D$ to a curve; moreover $S_2,S_3\subset D$ are fibers of this contraction. 

Consider the divisor $H\subset Z$ (see Step \ref{H}), and let $\ol_W(1)$ be the ample generator of $\Pic(W)$. Using Steps \ref{section} and \ref{d=2}, and Th.~\ref{factorization}($\ref{Y}$) we have $H\cong K\cong Y=\pr_{\pr^2}(\mathcal{O}\oplus \mathcal{O}(2))$; and it is not difficult to see that $\ol_Z(H)$
  is the pullback of $\ol_W(1)$. Moreover, $S_1\subset H$ is disjoint from the negative section $D\cap H$ (see Step \ref{D}) and has degree $2$ over $\pr^2$; this implies that $S_1\in|\ol_Z(2H)_{|H}|$ in $H$. Since $Z$ is Fano and $\ol_Z(H)$ is nef, we have $h^1(Z,H)=h^1(Z,K_Z-K_Z+H)=0$ by Kodaira vanishing, therefore the restriction $H^0(Z,\ol_Z(2H))\to H^0(H,\ol_Z(2H)_{|H})$ is surjective. We conclude that $S_1$
is a complete intersection of elements in the linear systems $|H|$ and $|2H|$, and  this is Example \ref{e2}.
\end{proof}
This concludes the proof of Th.~\ref{main}.
\begin{remark}
A posteriori, we see that the varieties $X_1$, $X_2$, $Z$ and $Y$ are always toric; moreover it is not difficult to see that there is always a choice of 
the ordering of the three blow-ups $X\to Z$ such that $X_1$ is toric and Fano. 

More precisely, in the two non-toric cases one can choose an ordering of the blow-ups in such a way that,
 following the notation of \cite{bat2} for toric Fano $4$-folds:
\begin{enumerate}[--]
\item
in Example \ref{e1},  $X_1$ is $H_4$ and $X_2$ is $D_8$;
\item in Example \ref{e2}, $X_1$ is $H_1$ and $X_2$ is $D_2$.
\end{enumerate}
\end{remark}
\begin{remark}[conic bundles of Fano manifolds]
The varieties $X$ of Examples \ref{e1} and \ref{e2} give new examples of Fano varieties with a conic bundle $X\to Y$ such that $\rho_X-\rho_Y=3$. It is shown in \cite[Th.~1.1]{eleonora} that if $X$ is a Fano manifold (of arbitrary dimension) which is not a product of varieties of smaller dimension, and has a conic bundle $X\to Y$, then $\rho_X-\rho_Y\leq 3$.
\end{remark} 
Given a conic bundle $h\colon X\to Y$, let $\bigtriangleup:=\{y\in Y| \ h^{-1}(y) \ \text{is singular}\}$ be its \textit{discriminant divisor}. As a consequence of our main result, we find an explicit description of the discriminant divisors of conic bundles encoded by the varieties of Th.~\ref{main}. See also \cite[Cor.~3.4]{M_R} for some partial results in this direction.
\begin{corollary} Let $X$ be a Fano 4-fold with $\rho_X=5$, admitting a conic bundle $h\colon X\to Y$ such that $\rho_X-\rho_Y=3$. Denote by $\bigtriangleup$ the discriminant divisor of $h$. Then one of the following holds:
\begin{enumerate}[$(i)$]
\item $\bigtriangleup\cong \mathbb{P}^2 \sqcup \mathbb{P}^2$, and $X$ is toric of combinatorial type $K$;
\item $\bigtriangleup\cong (\mathbb{P}^1\times \mathbb{P}^1) \sqcup \mathbb{P}^2$, and $X$ is the variety of Example \ref{e1};
\item  $\bigtriangleup\cong S \sqcup \mathbb{P}^2$ where $S$ is a del Pezzo surface of degree 2, and $X$ is the variety of Example \ref{e2}.
\end{enumerate}
\begin{proof} We first show that $\delta_X=3$. In view of \cite[Th.~1.1]{M_R} we are left to analyze the case in which $X\cong S_1\times S_2$ with $S_i$ del Pezzo surfaces. In this situation, $h$ is induced by a conic bundle on one of the two del Pezzo surfaces $S_i$, say $S_1$, and $Y\cong\pr^1\times S_2$
(see e.g.\ 
the proof of \cite[~Th. 4.2(1)]{eleonora}). Being $\rho_Y=\rho_X-3=2$, we conclude that $S_2\cong\pr^2$, $\rho_{S_1}=4$, and
 $X\cong S_1\times\pr^2=K_4$, thus $\delta_X=3$. 

We observe that $h\colon X\to Y$ satisfies all the properties $(\ref{Y})$-$(\ref{Bi})$ listed in Th.~\ref{factorization}. Indeed, by \cite[Prop.~3.5(1) and Prop.~4.2(2)]{eleonora} we may take a factorization for $h$ such that $(\ref{g})$, $(\ref{blowup})$, and $(\ref{Bi})$ hold. All the remaining properties follow by arguing as in the proof of Th.~\ref{factorization}. This implies that we can run the arguments of the proof of Th.~\ref{main} replacing $\psi \circ f$ by $h$. Let us keep the notation as in that theorem.

Then $\bigtriangleup=A_1\sqcup A_2$, and using Step \ref{Z} we have $A_i\cong B_i\cong S_i$ for $i=1,2$. At this point, Steps \ref{K}, \ref{Ex1}, \ref{Ex2} and their proofs give respectively $(i)$, $(ii)$, and $(iii)$, hence the statement. 
\end{proof}
\end{corollary}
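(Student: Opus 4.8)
The plan is to reduce the statement to Theorem~\ref{main} and then read off $\bigtriangleup$ from the structure produced in its proof. First I would check that $\delta_X=3$: since $\rho_Y=\rho_X-3=2$, by \cite[Th.~1.1]{M_R} the only case needing separate treatment is when $X\cong S_1\times S_2$ is a product of two del Pezzo surfaces, and then $h$ is induced by a conic bundle on one of the factors, say $S_1$, so that $Y\cong\pr^1\times S_2$; now $\rho_Y=2$ forces $S_2\cong\pr^2$ and $\rho_{S_1}=4$, hence $X\cong S_1\times\pr^2=K_4$ and again $\delta_X=3$. So in every case $X$ is a Fano $4$-fold with $\rho_X=5$ and $\delta_X=3$.

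Next I would show that $h$ itself can take the role of $\psi\circ f$ in Theorem~\ref{factorization}. Using \cite[Prop.~3.5(1) and Prop.~4.2(2)]{eleonora} one obtains a factorization $X\stackrel{f}{\la}X_2\stackrel{\psi}{\la}Y$ of $h$ realising properties $(\ref{g})$, $(\ref{blowup})$ and $(\ref{Bi})$, while the remaining properties $(\ref{Y})$, $(\ref{Ai})$, $(\ref{A2})$ follow exactly as in the proof of Theorem~\ref{factorization}. With this in hand the whole argument of the proof of Theorem~\ref{main}, namely Steps~\ref{Z}--\ref{Ex2}, goes through verbatim with $\psi\circ f$ replaced by $h$; hence $X$ is one of $K_1,\dots,K_4$ or one of Examples~\ref{e1}, \ref{e2}, and $Z$ carries the three pairwise disjoint surfaces $S_1,S_2,S_3$ of those steps.

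It then remains to identify $\bigtriangleup$. A fiber of $h=\psi\circ f$ over $y\in Y$ is singular precisely when $f$ blows up a point of the fiber $\psi^{-1}(y)$, that is, when $y\in A_1\cup A_2$; since $A_1\cap A_2=\emptyset$ by Theorem~\ref{factorization}$(\ref{Ai})$, this gives $\bigtriangleup=A_1\sqcup A_2$. By Step~\ref{Z} the blow-up $g$ is an isomorphism near $B_1$ and $B_2$, so $A_i\cong B_i\cong S_i$ for $i=1,2$; by Step~\ref{d}, $S_2$ is a section of $\ph$, whence $A_2\cong\pr^2$, while $S_1$ has degree $d$ over $\pr^2$. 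To conclude I would recognize $S_1$ in each case: $d=1$ (Step~\ref{K}) gives $S_1\cong\pr^2$, hence $(i)$; $a=1$ (Step~\ref{Ex1}) gives that $S_1$ is the smooth quadric surface $\pr^1\times\pr^1$, hence $(ii)$; and $a=2$ (Step~\ref{Ex2}) gives that $S_1$ is the degree-$2$ del Pezzo surface of Example~\ref{e2}, hence $(iii)$.

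I expect the only genuinely non-formal points to be the reduction to $\delta_X=3$ (handling the product case and tracking the Picard numbers) and the verification that an arbitrary conic bundle with $\rho_X-\rho_Y=3$ admits a factorization satisfying all of properties $(\ref{Y})$--$(\ref{Bi})$ of Theorem~\ref{factorization}; everything after that is bookkeeping built on Theorem~\ref{main} and its proof.
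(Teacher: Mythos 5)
Your proposal is correct and follows essentially the same route as the paper: the same reduction of the product case to $K_4$ to get $\delta_X=3$, the same use of \cite[Prop.~3.5(1) and Prop.~4.2(2)]{eleonora} to make $h$ play the role of $\psi\circ f$ in Theorem~\ref{factorization}, and the same rerun of Steps~\ref{Z}--\ref{Ex2} to read off $\bigtriangleup=A_1\sqcup A_2$ with $A_i\cong B_i\cong S_i$ in the three cases. Your extra remarks (why the singular fibers lie exactly over $A_1\cup A_2$, and the identification of $S_1$ as $\pr^2$, $\pr^1\times\pr^1$, or a degree-$2$ del Pezzo surface) only make explicit what the paper leaves implicit.
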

\begin{proof}[Proof of Cor.~\ref{rho2}]
If $\delta_X\leq 3$, then $\rho_X\leq \delta_X+\dim\N(D,X)\leq 5$, and the statement follows from Th.~\ref{main}.

If instead $\delta_X\geq 4$, by \cite[Th.~1.1]{codim} we have $X\cong S\times T$ where $S$ and $T$ are del Pezzo surfaces. We can assume that $\pi(D)=T$, where $\pi\colon X\to T$ is the projection. Let us consider the pushforward $\pi_{*}\colon \N(X)\to\N(T)$. Then $\pi_*(\N(D,X))=\N(T)$; on the other hand $\pi$ cannot be finite on $D$, so that $\N(D,X)\cap\ker\pi_*\neq 0$, and $\dim\N(T)<\dim\N(D,X)=2$. We conclude that $\rho_T=1$, hence $T\cong\pr^2$, and being $\delta_{X}=\rho_{S}-1$ (cf.\ \cite[Ex.~3.1]{codim}) we get $\rho_S\geq 5$ and $\rho_X\geq 6$.
\end{proof}
\begin{proof}[Proof of Cor.~\ref{31}]
Let us take the push-forward $\sigma_*\colon\N(X)\to\N(X')$. We have $\sigma_*(\N(\Exc(\sigma),X))=\R[\sigma(\Exc(\sigma))]$, and $\dim\ker\sigma_*=1$, thus $\dim\N(\Exc(\sigma),X)=2$. By Cor.~\ref{rho2}, $X$ is one of the $4$-folds appearing in Th.~\ref{main}, so we only have to check which of these varieties admit a contraction as in the statement.
We already know by Th.~\ref{toru} that the toric $4$-fold $K_3$ and Example \ref{e1} do. 

It is easy to check from \cite{bat2}, using the primitive relations of the varieties $K_i$, that $K_2$ and $K_4\cong\pr^2\times S_4$ do not have any elementary divisorial contraction such that the image of the exceptional divisor is a curve, while $K_1$ does, and satisfies the statement.

Concerning Example \ref{e2}, keeping the same notation as in the example, we have $D\cong \pr^1\times\pr^2$ and $\ma{N}_{D/Z}\cong\ol_{\pr^1\times\pr^2}(1,-2)$.
If $\w{D}\subset X$ is the transform of $D$, it is not difficult to check that 
$\w{D}\cong \pr^1\times\pr^2$ and $\ma{N}_{\w{D}/X}\cong\ol_{\pr^1\times\pr^2}(-1,-2)$.
Then $-2K_X+\w{D}$ is nef, and has intersection zero only with the curves in $\{pt\}\times\pr^2\subset\w{D}$, so the classes of these curves belong to an extremal ray of $\NE(X)$ which gives the desired contraction.
\end{proof}

\bigskip

\section{The examples}\label{explicit}
\subsection{Example \ref{e2}}\label{details}
\noindent The $4$-fold $Z:=\pr_{\pr^2}(\ol^{\oplus 2}\oplus \ol(2))$ has two contractions, the $\pr^2$-bundle $\ph\colon Z\to\pr^2$, and a divisorial contraction $Z\to W:=\pr(1,1,1,2,2)$. We note that $\Pic(W)$ is generated by a very ample line bundle $\ol_W(1)$ which embeds $W$ in $\pr^7$ as  the cone over the Veronese surface, with vertex a line; the birational morphism $Z\to W$ sends the exceptional divisor $D$ to this line. 

Let  $H\subset Z$ be the pullback of a general element in $|\ol_W(1)|$; then  $H$ is a resolution of a cone over the Veronese surface (with vertex a point), and $H\cong\pr_{\pr^2}(\ol\oplus\ol(2))$. We have:
$$H\sim D+\ph^*\ol_{\pr^2}(2)\quad\text{and}\quad -K_Z= 3H+\ph^*\ol_{\pr^2}(1).$$

Let $F_1,F_2\subset D$ be two distinct fibers of the contraction $Z\to W$, so that $F_i\cong\pr^2$ are sections of $\ph\colon Z\to\pr^2$. In $D$ we have $F_i\in|{H}_{|D}|$.

Let $S\subset Z$ be a general complete intersection of elements in the linear systems $|{H}|$ and $|2{H}|$; notice that $\ph_{|S}\colon S\to\pr^2$ is finite of degree $2$.
By adjunction $-K_S=(-K_Z-3{H})_{|S}=\ph^*\ol_{\pr^2}(1)_{|S}$, and we deduce that $S$ is a del Pezzo surface of degree $2$.

The surfaces $S,F_1,F_2$ are pairwise disjoint in $Z$. Let
 $\sigma\colon X\to Z$ be the blow-up of $Z$ along $S$, $F_1$, and $F_2$.
\begin{lemma}\label{nef}
$-K_X$ has positive intersection with every curve in $X$.
\end{lemma}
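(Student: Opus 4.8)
The statement to prove is that $-K_X$ is positive on every curve of $X$, where $X$ is the blow-up of $Z=\pr_{\pr^2}(\ol^{\oplus 2}\oplus\ol(2))$ along the three pairwise disjoint surfaces $S$, $F_1$, $F_2$. The natural strategy is to write $-K_X$ in terms of $\sigma^*(-K_Z)$ and the three exceptional divisors $E_S,E_{F_1},E_{F_2}$ via the blow-up formula, pull back the known expression $-K_Z=3H+\ph^*\ol_{\pr^2}(1)$, and then test positivity curve by curve according to where the curve sits relative to the exceptional loci. First I would fix notation: let $E_0=E_S$, $E_1=E_{F_1}$, $E_2=E_{F_2}$, with $c=\operatorname{codim}$ of each center equal to $2$, so $K_X=\sigma^*K_Z+E_0+E_1+E_2$, hence $-K_X=\sigma^*(3H+\ph^*\ol_{\pr^2}(1))-E_0-E_1-E_2$.

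**Key steps.** Let $C\subset X$ be an irreducible curve and $C'=\sigma_*C$ its image in $Z$. The plan is to split into cases according to whether $C$ is contracted by $\sigma$ (i.e. lies in a fiber of some $E_i\to$ center) or is mapped to a genuine curve $C'\subset Z$. In the contracted case, $C$ is a line in a $\pr^1$-fiber of $E_i\to F_i$ or in a $\pr^1$-fiber of $E_0\to S$ (since $F_i\cong\pr^2$ has codimension $2$ and $S$ is a surface of codimension $2$), and one computes $-K_X\cdot C=(-E_i)\cdot C=1>0$ directly from $\ol_{E_i}(-E_i)$ being the relative $\ol(1)$. In the non-contracted case, write $-K_X\cdot C=\sigma^*(-K_Z)\cdot C-\sum_i E_i\cdot C=(-K_Z)\cdot C'-\sum_i m_i$, where $m_i=E_i\cdot C$ measures the multiplicity/incidence of $C'$ with the $i$-th center. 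Here I would use: $-K_Z\cdot C'=3(H\cdot C')+\ph^*\ol_{\pr^2}(1)\cdot C'$, that $H$ is nef (pullback of the ample $\ol_W(1)$), and crucially that $H\cdot C'$ controls how much $C'$ can meet each center. Since $F_i\in|H_{|D}|$ and $S$ is cut out by $|H|$ and $|2H|$, a curve $C'$ meeting $F_i$ with high multiplicity or passing through $S$ forces $H\cdot C'$ to be correspondingly large; the genericity of $S$ (a general complete intersection) is what prevents pathological secant behaviour. I would quantify this by: (1) if $C'\not\subset D$ then $D\cdot C'\ge 0$ and $F_i\cdot C'$ is bounded by $H\cdot C'$ via $F_i\sim H-\ph^*\ol_{\pr^2}(\text{something})$ on $D$-adjacent computations; (2) if $C'\subset D\cong\pr^1\times\pr^2$ use $\ma N_{D/Z}\cong\ol_{\pr^1\times\pr^2}(1,-2)$ together with $S\cap D=\emptyset$ and $F_i$ being fibers $\{pt\}\times\pr^2$, so only $E_1,E_2$ can meet such $C$ and the count is again controlled; (3) the remaining delicate subcase is $C'$ a secant line to $S$ in a fiber $Z_p\cong\pr^2$ meeting $S$ in two points — but such a line has $H\cdot C'=1$ while being forced to meet $E_0$ in only one point per point of $S$, and the genericity of $S$ ensures no line is too special. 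In each case I would check the resulting integer $-K_Z\cdot C'-\sum m_i$ is $\ge 1$.

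**Main obstacle.** The hard part will be the case $C'\subset D$ or $C'$ a multisecant/special curve of $S$: one must rule out, using only the genericity of $S$ as a complete intersection of $|H|$ and $|2H|$, that some curve (e.g. a conic $2$-secant to $S$, or a line meeting $S$ and one $F_i$ simultaneously) has $\sum m_i$ as large as $(-K_Z)\cdot C'$. This amounts to a careful incidence/genericity analysis — essentially showing the three centers are in "sufficiently general position" fiberwise over $\pr^2$, in the same spirit as Step \ref{points} — and is where the bulk of the (routine but fiddly) computation lives. The other cases are straightforward intersection-number bookkeeping with the nef divisor $H$ and the explicit normal bundles recorded in \S\ref{details}.
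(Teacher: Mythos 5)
Your overall skeleton (write $-K_X=\sigma^*(3H+\ph^*\ol_{\pr^2}(1))-E_0-E_1-E_2$, split according to the image curve $C'=\sigma(C)$, and use the nef divisor $H$ to bound the multiplicities $E_i\cdot C$) is the same as the paper's, but the proposal has a genuine gap exactly where you locate the ``main obstacle'', and the way you propose to close it would not work. The delicate curves are not eliminated by genericity of $S$: for \emph{every} $p\in\pr^2$ the fiber $Z_p$ contains the line through the two points of $S\cap Z_p$, and on its transform $-K_X$ has degree exactly $1$ ($-K_Z\cdot C'=3$, $E_0\cdot C=2$), so no incidence/genericity argument can rule these curves out or create slack; they must be computed directly. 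Moreover, your mechanism ``$F_i\cdot C'$ is bounded by $H\cdot C'$ via $F_i\sim H-\ph^*\ol_{\pr^2}(\cdot)$'' is not meaningful as stated, since $F_i$ and $S$ have codimension $2$; the correct version is $E_i\cdot C\leq \sigma^*H_i\cdot C=H\cdot C'$ for a member $H_i\in|H|$ containing the center and \emph{not} containing $C'$. For $F_1,F_2$ one can take $H_i$ general through $F_i$, but for $S$ there is a \emph{unique} member $H_0\in|H|$ containing it, so all curves with $C'\subset H_0$ (in particular the $2$-secant fiber lines above) escape this bound and need a separate argument, as do curves in the transform $\w{D}$ and in the $E_i$.

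This is precisely what the paper supplies and the proposal does not: after the decomposition $-K_X\cdot\Gamma=\sum_{i=0}^{2}(\sigma^*H_i-E_i)\cdot\Gamma+\sigma^*\ph^*\ol_{\pr^2}(1)\cdot\Gamma$ (with adapted $H_i$, and an analysis of the equality case showing $\Gamma'$ would have to be the line $\overline{z_1z_2}\subset D$), it treats the excluded divisors by showing that $(-K_X)_{|\w{D}}$, $(-K_X)_{|E_i}$ are ample via the explicit normal bundles, and handles $\w{H}_0$ by computing $-K_X\cdot\ell=1$ on the $\pr^1$-bundle fibers (the $2$-secant lines) and using that $\NE(\w{H}_0)$ is generated by $[\ell]$ and curves in $\w{H}_0\cap\w{D}$; genericity of $S$ enters only through the concrete facts that $H_0$ is smooth and disjoint from $F_1,F_2$. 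Without these steps — in particular without identifying $H_0$, treating curves inside it, and doing the tight degree-$1$ computations — your plan does not yet constitute a proof.
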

\begin{proof}
Let $\w{D}\subset X$ be the transform of $D$, $E_i$ 
the exceptional divisor over $F_i$, 
and $E_0$ the exceptional divisor over $S$. 
Using that the surface $S$ has degree 2 over $\pr^2$, it is not difficult to see that there exists a unique $H_0\in|{H}|$ containing it; let $\w{H}_0\subset X$ be its transform. By the generality of $S$, $H_0$ is smooth and disjoint from $F_1$ and $F_2$. 

Let $\Gamma\subset X$ be an irreducible curve not contained in any of the divisors $\w{D}$, $\w{H}_0$,  $E_1$, or $E_2$. If $\sigma(\Gamma)$ is a point, then $-K_X\cdot\Gamma=1$. Otherwise, 
we set $\Gamma':=\sigma(\Gamma)\subset Z$, so that $\Gamma'$ is not contained in $D$, nor in $H_0$, nor in $F_i$.

For $i=1,2$ let $H_i\in|{H}|$ be a general element containing $F_i$,  so that these divisors do not contain $\Gamma'$. 
 Then $(\sigma^*H_i-E_i)\cdot\Gamma\geq 0$ for $i=0,1,2$,
 and we get
\begin{gather*}
-K_X\cdot \Gamma=\Bigl(\sigma^*(-K_Z)-\sum_{i=0}^2E_i\Bigr)\cdot\Gamma=
\Bigl(\sigma^*\bigl(3{H}+\ph^*\ol_{\pr^2}(1)\bigr)-\sum_{i=0}^2E_i\Bigr)\cdot\Gamma\\=
\sum_{i=0}^2 (\sigma^*{H}-E_i)\cdot\Gamma+\sigma^*\ph^*\ol_{\pr^2}(1)\cdot\Gamma\geq 0.
\end{gather*}

We show that the intersection is in fact positive. If $-K_X\cdot \Gamma=0$, then $\sigma^*\ph^*\ol_{\pr^2}(1)\cdot\Gamma=0$, so that $\Gamma'$ is contained in a fiber $Z_p$ of the $\pr^2$-bundle $\ph\colon Z\to\pr^2$. 
Let $i\in\{1,2\}$. We also have  
$(\sigma^*H_i-E_i)\cdot\Gamma=0$, so that the transform of $H_i$ in $X$ is disjoint from $\Gamma$. Note that $(H_i)_{|Z_p}$ is a line in $Z_p\cong\pr^2$; this means that $H_i$ meets $\Gamma'$ only at the point $z_i:=F_i\cap Z_p$, transversally. Thus $\Gamma'$ must be the line $\overline{z_1z_2}$ in $Z_p$. However this line is contained in the exceptional divisor $D$, indeed $D\cap Z_p$ is a line and contains both points $z_i$, against our assumptions on $\Gamma$.

\bigskip

Now we show that the restriction of $-K_X$ to the divisors $\w{D}$, $\w{H}_0$, $E_1$, $E_2$  is ample.

We have $\w{D}\cong\pr^1\times\pr^2$ with $\ma{N}_{\w{D}/X}\cong\ol_{\pr^1\times\pr^2}(-1,-2)$, so that $(-K_X)_{|\w{D}}=-K_{\w{D}}+\ma{N}_{\w{D}/X}\cong\ol_{\pr^1\times\pr^2}(1,1)$ is ample.

\medskip

Similarly, for $i=1,2$,  $E_i\cong\pr_{\pr^2}(\ol\oplus\ol(2))$ and $\ma{N}_{E_i/X}\cong -A-2B$, where $A$ is the negative section of $E_i\to\pr^2$ and $B$ is the pullback of $\ol_{\pr^2}(1)$. Moreover $-K_{E_i}=2A+5B$, so that $(-K_X)_{|E_i}=-K_{E_i}+\ma{N}_{E_i/X}\cong A+3B$ is ample.

\medskip

Finally we have $\w{H}_0\cong H_0\cong\pr_{\pr^2}(\ol\oplus\ol(2))$, 
and the negative section is $\w{H}_0\cap \w{D}$; we know that $(-K_X)_{|\w{H}_0\cap\w{D}}$ is ample.

If $\ell\subset \w{H}_0$ is a fiber of the $\pr^1$-bundle, then $\ell\cdot E_i=0$ for $i=1,2$, $\ell\cdot E_0=2$, and $-K_Z\cdot\sigma(\ell)=3$, thus $-K_X\cdot \ell=1$.

Note that $\NE(\w{H}_0)$ is generated by $[\ell]$ and by the class of a curve in 
$\w{H}_0\cap \w{D}$; we conclude that $(-K_X)_{|\w{H}_0}$ is ample.
\end{proof}
The following is a standard computation.
\begin{lemma}\label{formula}
Let $f\colon X\to Y$ be the blow-up of a smooth projective $4$-fold along a smooth irreducible surface $S$. Then
we have the following:
\begin{align*}
K_X^4&=K_Y^4-3(K_{Y|S})^2-2 K_S\cdot K_{Y|S}+c_2(\ma{N}_{S/Y})-K_S^2,\\
K_X^2\cdot c_2(X)&=K_Y^2\cdot c_2(Y)-12\chi(\ol_S)+2K_S^2-2K_S\cdot K_{Y|S}-2c_2(\ma{N}_{S/Y}),\\
\chi(X,-K_X)&=\chi(Y,-K_Y)-\chi(\ol_S)-\frac{1}{2}\bigl((K_{Y|S})^2+K_S\cdot K_{Y|S}
\bigr).
\end{align*}
\end{lemma}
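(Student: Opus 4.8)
The plan is to derive all three identities by the standard blow-up bookkeeping, pushing every intersection number on $X$ down to $S$. Throughout write $E\subset X$ for the exceptional divisor, $j\colon E\hookrightarrow X$ for its inclusion, and $\pi:=f|_E\colon E\to S$ for the associated $\pr^1$-bundle $\pr_S(\ma{N}_{S/Y})$. Since $S$ has codimension $2$ in $Y$ one has $K_X=f^*K_Y+E$ and $\ma{N}_{E/X}\cong\ol_E(-1)$, so that $j^*E=-\xi$, where $\xi$ denotes the relative hyperplane class; the Grothendieck relation and the projection formula for $\pi$ then give $\pi_*\xi=1$, $\pi_*\xi^2=c_1(\ma{N}_{S/Y}^{\vee})$ and $\pi_*\xi^3=c_1(\ma{N}_{S/Y})^2-c_2(\ma{N}_{S/Y})$, while adjunction gives $c_1(\ma{N}_{S/Y})=K_S-K_{Y|S}$. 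Together with the projection formula, these relations compute $f_*(E^k)$ and $f_*\bigl((f^*\alpha)\cdot E^k\bigr)$ for any $\alpha\in\Pic(Y)$, which is the common engine behind the first two formulas.

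I would prove the third identity first, as it is the cleanest and avoids Chern classes. Tensoring the structure sequence $0\to\ol_X(-E)\to\ol_X\to\ol_E\to 0$ by $f^*\ol_Y(-K_Y)$ and using $-K_X=f^*(-K_Y)-E$ produces the exact sequence $0\to\ol_X(-K_X)\to f^*\ol_Y(-K_Y)\to\ol_E(\pi^*(-K_{Y|S}))\to 0$. Since $Rf_*\ol_X=\ol_Y$ (as $S$ is smooth) and $R\pi_*\ol_E=\ol_S$ (as $\pi$ is a $\pr^1$-bundle), the two right-hand terms have Euler characteristics $\chi(Y,-K_Y)$ and $\chi(S,-K_{Y|S})$; hence $\chi(X,-K_X)=\chi(Y,-K_Y)-\chi(S,-K_{Y|S})$. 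Riemann--Roch on the surface $S$, namely $\chi(S,L)=\chi(\ol_S)+\frac{1}{2}L\cdot(L-K_S)$ with $L=-K_{Y|S}$, then yields exactly the stated expression.

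For $K_X^4$ I would expand $(f^*K_Y+E)^4$: the term $(f^*K_Y)^4$ gives $K_Y^4$, the term $4(f^*K_Y)^3\cdot E$ vanishes because $f_*E=0$, and the remaining three terms are evaluated by writing $E^2=j_*(-\xi)$, $E^3=j_*(\xi^2)$, $E^4=-j_*(\xi^3)$ and applying the pushforward formulas for $\pi$ recorded above together with $c_1(\ma{N}_{S/Y})=K_S-K_{Y|S}$; collecting terms gives the first formula. For $K_X^2\cdot c_2(X)$ I would combine this with the standard formula for the second Chern class of a blow-up along a smooth codimension-two centre, which writes $c_2(X)=f^*c_2(Y)+j_*\mu$ for an explicit $\mu\in\Pic(E)$ built from $\xi$ and $\pi^*K_S$; substituting this and $K_X=f^*K_Y+E$, expanding, and reducing to $S$ as before --- rewriting the restriction $c_2(Y)|_S$ via the normal bundle sequence $0\to T_S\to T_{Y|S}\to\ma{N}_{S/Y}\to 0$ and Noether's formula $12\chi(\ol_S)=K_S^2+c_2(S)$ --- produces the middle identity.

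The only delicate point is sign bookkeeping: the convention for $\pr_S(\ma{N}_{S/Y})$, the sign in $j^*E=\pm\xi$, and the exact shape of the blow-up correction to $c_2(X)$. A convenient internal check is that, after the adjunction substitution, each final answer involves $\ma{N}_{S/Y}$ only through $K_S-K_{Y|S}$ and $c_2(\ma{N}_{S/Y})$, hence must be insensitive to swapping $\ma{N}_{S/Y}$ with its dual in the intermediate computations; this pins the coefficients down. Everything else is routine intersection theory on $X$, $E$ and $S$.
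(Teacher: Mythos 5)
Your proposal is correct, and for the first two identities it is essentially the paper's own argument: expand $K_X=f^*K_Y+E$, reduce powers of $E$ to the exceptional $\pr^1$-bundle via the Grothendieck/Segre relations (your pushforwards $\pi_*\xi^k$ are exactly the paper's use of $\sum_i(-1)^i\pi^*c_i(\ma{N}_{S/Y}^{\vee})(-E_{|E})^{2-i}=0$), use $c_1(\ma{N}_{S/Y})=K_S-K_{Y|S}$, and for $c_2(X)$ invoke the blow-up formula $c_2(X)=f^*c_2(Y)+j_*(\pi^*K_S-E_{|E})$ (the paper cites \cite[Ex.~15.4.3]{fultonint}; this pins down the correction term $\mu$ you left implicit, and your proposed ingredients --- the normal bundle sequence for $c_2(T_Y)_{|S}$ and Noether's formula --- are precisely what produces the $-12\chi(\ol_S)$ term). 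Where you genuinely diverge is the third identity: you prove it directly from the exact sequence $0\to\ol_X(-K_X)\to f^*\ol_Y(-K_Y)\to\ol_E\otimes\pi^*(-K_{Y|S})\to 0$, the vanishing of higher direct images for $f$ and $\pi$, and Riemann--Roch on the surface $S$, whereas the paper deduces it from the first two formulas via the four-fold Riemann--Roch expression for $\chi(X,-K_X)$ (together with $\chi(\ol_X)=\chi(\ol_Y)$). Your route has the advantage of being independent of the Chern class computations and of any sign conventions, so it can even serve as a consistency check on the first two formulas; the paper's route simply recycles quantities it has already computed. Both are valid.
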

\begin{proof}
Let $E\subset X$ be the exceptional divisor, set $\pi:=f_{|E}
\colon E\to S$, and let $F\subset E$ be a fiber of $\pi$.

We have $(f^*K_Y)^4=K_Y^4$, and for $i\in\{0,1,2,3\}$ one has $(f^*K_Y)^i\cdot E^{4-i}=
(\pi^*K_{Y|S})^i\cdot (E_{|E})^{3-i}$. 
This gives
$$(f^*K_Y)^3\cdot E=0\quad\text{and}\quad (f^*K_Y)^2\cdot E^2=-(K_{Y|S})^2.$$

In $H^4(E,\Z)$ we have $\sum_{i=0}^2(-1)^i\pi^*c_i(\ma{N}_{S/Y}^{\vee})(-E_{|E})^{2-i}=0$, which yields
$$E_{|E}^2=-(\pi^*c_1(\ma{N}_{S/Y}^{\vee}))\cdot E_{|E} -c_2(\ma{N}_{S/Y}^{\vee})F.$$
Recall also that $c_1(\ma{N}_{S/Y}^{\vee})=K_{Y|S}-K_S$. 
Using these formulas, we get
\begin{gather*}
f^*K_Y\cdot E^3=(K_{Y|S})^2-K_S\cdot K_{Y|S},\\
E^4=c_2(\ma{N}_{S/Y})+2K_S\cdot K_{Y|S}-(K_{Y|S})^2-K_S^2.
\end{gather*}
Finally we have $K_X^4=(f^*K_Y+E)^4=\sum_{i=0}^4\binom{4}{i}(f^*K_Y)^i\cdot E^{4-i}$, which yields the formula for $K_X^4$.

By \cite[Ex.~15.4.3]{fultonint} we have $c_2(X)=f^*c_2(Y)+j_*(\pi^*(K_S)-E_{|E})$ in $H^4(X,\Z)$, where $j\colon E\hookrightarrow X$ is the inclusion and $j_*\colon H^2(E,\Z)\to H^4(X,\Z)$ is the Gysin homomorphism. Using that $j_*\alpha\cdot \beta=\alpha\cdot\beta_{|E}$ for every $\alpha\in H^2(E,\Z)$ and $\beta\in H^4(X,\Z)$, a computation similar to the previous one  gives the formula for $K_X^2\cdot c_2(X)$. Finally, $\chi(X,-K_X)$ is given by the Riemann-Roch formula, which in this setting is as follows:
$$\chi(X,-K_X)=\frac{1}{12}\bigl(2K_X^2+K_X^2\cdot c_2(X)\bigr)+\chi(X,\ol_X).$$
\end{proof}
\begin{lemma}\label{e2fano}
The $4$-fold $X$ is Fano with $\rho_X=5$ and $\delta_X=3$. We also have:
\begin{gather*}
K_X^4=250,\quad K_X^2\cdot c_2(X)=172,\quad h^0(X,-K_X)=57,\\ b_3(X)=0,\quad b_4(X)=h^{2,2}(X)=13.\end{gather*}
 Moreover $h^1(X,{T}_X)=h^0(X,{T}_X)+6\geq 6$, where ${T}_X$ is the tangent bundle.
\end{lemma}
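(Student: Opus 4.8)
The plan is to read every assertion off the description $\sigma\colon X\to Z$ as a composition of three blow-ups of the toric fourfold $Z=\pr_{\pr^2}(\ol^{\oplus2}\oplus\ol(2))$ along the pairwise disjoint smooth surfaces $S,F_1,F_2$, using Lemma~\ref{nef}, standard facts on blow-ups, and the formulas of Lemma~\ref{formula}; after one non-computational point — ampleness of $-K_X$ — everything is a calculation. By Lemma~\ref{nef}, $-K_X\cdot\Gamma>0$ for every curve $\Gamma$, so $-K_X$ is nef, and once $K_X^4=250>0$ is known it is also big; hence by the base-point-free theorem $|-mK_X|$ is base-point-free for $m\gg0$, giving a morphism $\phi\colon X\to X''$ onto a normal variety with $-mK_X=\phi^*(\text{ample})$. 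No curve has zero anticanonical degree, so $\phi$ contracts no curve, hence is finite with connected fibres, hence an isomorphism; thus $-K_X$ is ample and $X$ is Fano.

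Next $\rho_X=\rho_Z+3=5$, since each blow-up raises the Picard number by one. For the Lefschetz defect I would show $\delta_X\geq3$ and $\delta_X\leq3$ separately. As $S\cap D=\emptyset$ while $F_1,F_2$ are Cartier divisors in the threefold $D\cong\pr^1\times\pr^2$, the strict transform $\w D\subset X$ is again $\cong\pr^1\times\pr^2$, so $\dim\N(\w D,X)\leq\rho_{\w D}=2$, whence $\delta_X\geq\rho_X-2=3$. Conversely, if $\delta_X\geq4$ then by \cite[Th.~1.1]{codim} $X\cong\Sigma_1\times\Sigma_2$ with $\Sigma_i$ del Pezzo surfaces, and $\delta_X\geq\max(\rho_{\Sigma_1},\rho_{\Sigma_2})-1$ forces $\rho_X=\rho_{\Sigma_1}+\rho_{\Sigma_2}\geq6$, a contradiction; so $\delta_X=3$.

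The numerical invariants come from three applications of Lemma~\ref{formula}, once the data of $Z$ and of the three centres are assembled: the intersection ring of the projective bundle $Z$ (giving, e.g., $H^4=4$, $H^3\cdot\ph^*\ol_{\pr^2}(1)=2$, $-K_Z=3H+\ph^*\ol_{\pr^2}(1)$); the fact that $S$ is a degree-$2$ del Pezzo surface with $\ma{N}_{S/Z}\cong\ol_Z(H)|_S\oplus\ol_Z(2H)|_S$ and $K_Z|_S$ computed by adjunction; and that $F_i\cong\pr^2$ with $\ma{N}_{F_i/Z}\cong\ol_{\pr^2}\oplus\ol_{\pr^2}(-2)$, $K_Z|_{F_i}\cong\ol_{\pr^2}(-1)$. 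This yields $K_X^4=250$, $K_X^2\cdot c_2(X)=172$, $\chi(X,-K_X)=57$, and then $h^0(X,-K_X)=57$ by Kodaira vanishing. For the topology, the blow-up formula $H^k(X;\Z)\cong H^k(Z;\Z)\oplus H^{k-2}(C;\Z)$ for a smooth codimension-$2$ centre $C$, applied to $F_1,F_2,S$ in turn, gives $b_3(X)=b_3(Z)+b_1(F_1)+b_1(F_2)+b_1(S)=0$ and $b_4(X)=b_4(Z)+b_2(F_1)+b_2(F_2)+b_2(S)=3+1+1+8=13$; since $Z$, $\pr^2$ and a del Pezzo surface carry cohomology of pure type $(p,p)$, so does $X$, whence $h^{2,2}(X)=b_4(X)=13$.

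Finally, since $-K_X$ is ample, Kodaira--Nakano vanishing applied to $T_X\cong\Omega^3_X\otimes(-K_X)$ gives $h^q(X,T_X)=0$ for $q\geq2$, so $h^1(X,T_X)=h^0(X,T_X)-\chi(X,T_X)$, and it suffices to show $\chi(X,T_X)=-6$. For the blow-up $f\colon X'\to X''$ of a smooth surface $C$ of codimension $\geq2$ one has $f_*T_{X'}=T_{X''}\langle C\rangle$ and $R^1f_*T_{X'}=0$ (a fibrewise $H^1$-vanishing on the $\pr^1$-bundle $\Exc(f)\to C$), so from $0\to T_{X''}\langle C\rangle\to T_{X''}\to\ma{N}_{C/X''}\to0$ one gets $\chi(X',T_{X'})=\chi(X'',T_{X''})-\chi(C,\ma{N}_{C/X''})$. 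Composing three times, $\chi(X,T_X)=\chi(Z,T_Z)-\chi(F_1,\ma{N}_{F_1/Z})-\chi(F_2,\ma{N}_{F_2/Z})-\chi(S,\ma{N}_{S/Z})$, and evaluating the four terms by Riemann--Roch (the relative and absolute Euler sequences on $Z$, Riemann--Roch on surfaces for $S$ and $\pr^2$) gives $24-1-1-28=-6$. I expect the main obstacle to be purely bookkeeping: getting the intersection numbers and normal bundles on $Z$ and along the centres right and propagating them through the three blow-up formulas, together with the short verifications that $-K_X$ is ample and $R^1f_*T_{X'}=0$.
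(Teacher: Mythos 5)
Your argument is correct and, for most of the lemma, coincides with the paper's: ampleness of $-K_X$ via Lemma \ref{nef} together with $K_X^4>0$ and the base-point-free theorem, $\rho_X=\rho_Z+3=5$, the invariants $K_X^4$, $K_X^2\cdot c_2(X)$, $h^0(X,-K_X)=\chi(X,-K_X)$ from Lemma \ref{formula} with the same normal-bundle data for $S,F_1,F_2$ (the paper takes $K_Z^4$, $K_Z^2\cdot c_2(Z)$, $\chi(Z,-K_Z)$ from Batyrev's table rather than recomputing them), and the Betti and Hodge numbers from the blow-up decomposition of cohomology. Two points differ. For $\delta_X$ you get $\delta_X\ge 3$ from $\w{D}$ instead of the exceptional divisors $E_i$ (equally good), and you exclude $\delta_X\ge 4$ by a Picard-number count for a product of del Pezzo surfaces rather than the paper's bare assertion that $X$ is not a product; this is fine in substance, but quote the product formula as an equality, $\delta_{S\times T}=\max(\rho_S,\rho_T)-1$, or the form of \cite[Th.~1.1]{codim} producing a del Pezzo factor with $\rho=\delta_X+1$, since the inequality $\delta_X\ge\max_i\rho_{\Sigma_i}-1$ as you wrote it points the wrong way for concluding $\rho_X\ge 6$. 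For $\chi(X,T_X)$ the paper simply plugs the already-computed invariants into the Riemann--Roch formula of \cite[Lemma~6.25]{vb}, whereas you push the tangent sheaf down the three blow-ups via $f_*T_{X'}=T_{X''}\langle C\rangle$ and $R^1f_*T_{X'}=0$, obtaining $\chi(X,T_X)=\chi(Z,T_Z)-\chi(\ma{N}_{F_1/Z})-\chi(\ma{N}_{F_2/Z})-\chi(\ma{N}_{S/Z})=24-1-1-28=-6$; your numbers check out, and this route is more self-contained, but it does require justifying (or referencing) the standard blow-up facts on $f_*T$ and $R^1f_*T$, which the paper's citation-based route avoids. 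Either way, Nakano vanishing then gives $h^1(X,T_X)=h^0(X,T_X)+6$, as in the paper.
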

\begin{proof}
We have $K_Z^4=594$, $K_Z^2\cdot c_2(Z)=240$, and $\chi(Z,-K_Z)=120$; this can be computed using toric geometry, or see \cite[Table 4, n.~7]{bat2}. 

We compute $K_X^4$ using Lemma \ref{formula}.

We calculate the contribution of $F_i\cong\pr^2$. We have $K_{Z|F_i}\cong\ol_{\pr^2}(-1)$ and $\ma{N}_{F_i/Z}\cong\ol_{\pr^2}(-2)\oplus\ol_{\pr^2}$;  this gives that the blow-up along each surface $F_i$ makes $K^4$ decrease by $18$.

Now we compute the contribution of $S$. In $Z$ we have
$H^4=4$, as the degree of $W\subset\pr^7$ is equal to the degree of the Veronese surface. Moreover $H^3\cdot\ph^*\ol_{\pr^2}(1)=2$, because if $\ell\subset\pr^2$ is a line, the image of $\ph^{-1}(\ell)$ under the birational map $Z\to W$ is a quadric cone. Finally $H^2\cdot\ph^*\ol_{\pr^2}(1)^2=1$, because $H_{|Z_p}\cong\ol_{\pr^2}(1)$ for every fiber $Z_p$ of $\ph$. We get:
\begin{gather*}
(K_{Z|S})^2=K_Z^2\cdot 2H^2=\bigl(3H+\ph^*\ol_{\pr^2}(1)\bigr)^2\cdot 2 H^2=98,\\
K_{Z|S}\cdot K_S=\bigl(3H+\ph^*\ol_{\pr^2}(1)\bigr)\cdot \ph^*\ol_{\pr^2}(1)\cdot 2H^2=14.
\end{gather*}
Moreover $\ma{N}_{S/Z}=\ol_S(H)\oplus\ol_S(2H)$, so that $c_2(\ma{N}_{S/Z})=H_{|S}\cdot 2H_{|S}=2H^2\cdot 2H^2=16$. In the end the blow-up along $S$ makes $K^4$ decrease by $308$, and we get:
$$K_X^4=K_Z^4-36-308=250.$$
Together with Lemma \ref{nef}, this shows that $-K_X$ is nef and big, hence semiample by the base-point-free theorem, and hence ample. Therefore $X$ is a Fano $4$-fold, and $\rho_X=5$.

For $i=1,2$ the divisor $E_i$ has $\rho_{E_i}=2$, so that $\delta_X\geq 5-2=3$. On the other hand $X$ is not a product, so \cite[Th.~1.1]{codim} implies that $\delta_X\leq 3$, and we conclude that $\delta_X=3$.

The values of  $K_X^2\cdot c_2(X)$ and $h^0(X,-K_X)=\chi(X,-K_X)$ can be computed from the ones of $Z$ using Lemma \ref{formula}, as done for $K_X^4$.
Also the Hodge numbers of $X$ can be easily computed by the explicit description of $X$ as a blow-up.

Finally, since $X$ is Fano, by Nakano vanishing we have $h^i(X,T_X)=0$ for every $i\geq 2$, so that $\chi(X,T_X)=h^0(X,T_X)-h^1(X,T_X)$. We can compute  $\chi(X,T_X)$
from the other invariants 
using Riemann Roch, see for instance \cite[Lemma 6.25]{vb} for an explicit formula.
\end{proof}
\subsection{Numerical invariants}
Table \ref{table} gives some relevant invariants of the varieties listed in Th.~\ref{main}; $T$ is the tangent bundle, and $S_4$ is the del Pezzo surface with $\rho_{S_4}=4$, namely
the blow-up of $\pr^2$ in three non collinear points. The invariants for the toric cases are given in \cite{bat2}, and the ones for Example \ref{e2} are given in Lemma \ref{e2fano}. For 
 Example \ref{e1}, the invariants are computed as for Example \ref{e2}, using the same technique as in the proof of Lemma \ref{e2fano}.

\medskip

\stepcounter{thm}
\begin{table}[!htbp]\caption{Fano $4$-folds with $\rho=5$ and $\delta=3$}\label{table}
$$\begin{array}{|c|c|c|c|c|c|c|c|c|c|}
\hline\hline
\rule{0pt}{2.5ex}  \text{$4$-fold}   & b_3 & h^{2,2} & h^{1,3} & K^4 & K^2\cdot c_2 & h^0(-K) & \chi({T})& \\
\hline\hline

K_1 &  0 & 6 & 0 & 364 &  196   & 78 & 10 &\text{toric} \\

\hline

K_2 &  0 & 6 & 0 & 354 & 192 &   76 & 10& \text{toric} \\

\hline

K_3 & 0 & 6 & 0 & 334 & 184  &  72 & 10 &\text{toric} \\

\hline

K_4\cong\pr^2\times S_4 &  0 & 6 & 0 & 324 & 180 & 70 & 10 &\text{toric} \\

\hline

\text{Ex.\ \ref{e1}} &  0 & 7 & 0 & 253 & 166  & 57 & 3 & \text{non toric} \\

\hline

\text{Ex.\ \ref{e2}} & 0 & 13 & 0 & 250 &  172 & 57 & -6\ \ &\text{non toric} \\

\hline\hline
\end{array}$$
\end{table}
\providecommand{\noop}[1]{}
\providecommand{\bysame}{\leavevmode\hbox to3em{\hrulefill}\thinspace}
\providecommand{\MR}{\relax\ifhmode\unskip\space\fi MR }
\providecommand{\MRhref}[2]{%
  \href{http://www.ams.org/mathscinet-getitem?mr=#1}{#2}
}
\providecommand{\href}[2]{#2}

\end{document}